\documentclass[11pt]{article}
\usepackage[T1]{fontenc}
\usepackage{lmodern}
\usepackage{amsmath,amssymb,amsthm,mathtools}
\usepackage{booktabs,array,longtable,float}
\usepackage[a4paper,margin=25mm]{geometry}
\usepackage{microtype}
\usepackage{xcolor}
\definecolor{inkblue}{RGB}{25,68,88}
\usepackage[colorlinks=true,breaklinks=true,linkcolor=inkblue,citecolor=inkblue,urlcolor=inkblue]{hyperref}
\hypersetup{pdftitle={Nonexistence of Strongly Regular Graphs via Gegenbauer Positivity and SRG Counting},pdfauthor={Zhen-Qi Liao, Jiaxin Wang, Wei-Hsuan Yu, Jianfeng Cai}}
\newtheorem{theorem}{Theorem}[section]
\newtheorem{proposition}[theorem]{Proposition}
\newtheorem{lemma}[theorem]{Lemma}

\theoremstyle{remark}

\newcommand{\R}{\mathbb R}
\newcommand{\Q}{\mathbb Q}
\newcommand{\Z}{\mathbb Z}

\DeclareMathOperator{\Adj}{Adj}
\DeclareMathOperator{\Aut}{Aut}
\DeclareMathOperator{\rank}{rank}
\DeclareMathOperator{\tr}{tr}

\numberwithin{equation}{section}
\allowdisplaybreaks[2]
\title{Nonexistence of Strongly Regular Graphs\\
via Gegenbauer Positivity and SRG Counting}
\author{%
\begin{tabular}{cc}
Zhen-Qi Liao$^{1}$ & Jiaxin Wang$^{2}$\\[2pt]
\small\href{mailto:l0911487082@gmail.com}{l0911487082@gmail.com} &
\small\href{mailto:jwangka@connect.ust.hk}{jwangka@connect.ust.hk}\\[7pt]
Wei-Hsuan Yu$^{1}$ & Jianfeng Cai$^{2}$\\[2pt]
\small\href{mailto:whyu@math.ncu.edu.tw}{whyu@math.ncu.edu.tw} &
\small\href{mailto:jfcai@ust.hk}{jfcai@ust.hk}
\end{tabular}\\[8pt]
\small $^{1}$Department of Mathematics, National Central University, Taiwan\\[3pt]
\small $^{2}$Department of Mathematics,\\[-2pt]
\small The Hong Kong University of Science and Technology, Hong Kong}
\date{14 October 2026}
\begin{document}
\maketitle
\begin{abstract}
This paper develops a natural extension of the multipoint spherical
semidefinite method of Liao and Yu~\cite{LY}, studying the interaction
between semidefinite programming and strongly regular graph counting.
Starting from the positive semidefinite kernels furnished by Gegenbauer
polynomials, we express weighted kernel sums and counting identities
in the same labelled induced-subgraph counts. Eliminating the counting
equations gives a semidefinite feasibility problem. Numerical dual
solutions guide the construction of rational certificates, whose
contradiction identities are verified exactly. Our aim is a
systematic method that treats many parameter sets through the same
construction. For least eigenvalue $-3$, it gives new counting
certificates for
$(351,140,73,44)$, $(550,162,75,36)$, $(703,182,81,35)$ and
$(1344,221,88,26)$, previously excluded by Liao and
Yu~\cite[Theorem~1]{LY}, and for six additional parameter sets:
\[
\begin{array}{@{}lll@{}}
(375,102,45,21), & (405,132,63,33), & (441,88,35,13),\\
(476,133,60,28), & (540,147,66,30), & (575,112,45,16).
\end{array}
\]
Seven certificates use Gegenbauer positivity
and counting information alone at the displayed orders. The cases
$v=375,441,575$ additionally use proved short-vector restrictions,
unique vertex extensions, and integer common-neighbour inequalities.
The proofs end in displayed inequalities with incompatible signs.
Their justification is given by the dual identities and the proved
positivity and counting relations; accompanying code automates the
longer rational arithmetic.
\end{abstract}

\section{Introduction}\label{sec:intro}
A strongly regular graph with parameters $(v,k,\lambda,\mu)$ is a
$k$-regular graph on $v$ vertices in which adjacent vertices have
$\lambda$ common neighbours and distinct nonadjacent vertices have
$\mu$ common neighbours. Its eigenspaces provide spherical two-distance
sets, whose sizes can be bounded by semidefinite programming.
The multipoint spherical approach of Liao and Yu~\cite{LY} obtains
nonexistence results by bounding the size of such sets. Here our goal
also includes understanding how semidefinite programming interacts with
the counting structure of strongly regular graphs. We start directly
from Gegenbauer positivity and formulate feasibility in induced-subgraph
counts. The eigenspace embedding supplies the inner products at which
the kernels are evaluated; a cardinality bound for general spherical
two-distance sets is no longer the optimization problem. This formulation
retains the degree and common-neighbour information of the graph.

Kov\'a\v{c}ikov\'a already combined Gegenbauer inequalities with
induced-subgraph counts~\cite[Proposition~4.2, Lemma~4.2, and
Sections~4.2--4.3]{kovacikova}. Her construction represents induced
subgraphs by normalized nonzero sums of their spherical vertex vectors and
groups the resulting sums by induced union types. Here we use Musin's
projected kernels at fixed reference configurations, with arbitrary
weights indexed by external adjacency profiles. The resulting positive
semidefinite matrices are linear in labelled induced-subgraph counts.
We combine these matrices with the SRG extension equations and, where
needed, further proved structural relations to construct explicit
rational nonexistence certificates.

Our four-point experiments in late 2025 combined the counting relations
of~\cite[Proposition~3.2 and Section~3.3.3]{kovacikova} with semidefinite
constraints but excluded none of the tested parameter sets. The
certificates below use counts through five, six, or seven vertices.

Musin's positivity theorem~\cite{musin} supplies inequalities in the
distribution of small induced subgraphs, while double counting supplies
linear equations in the same variables. Eliminating the latter gives
a semidefinite problem whose dual guides the search for rational
contradiction identities.
Our aim is a reusable method that treats many parameter sets through
one construction. The counting identities, kernel inequalities, and
dual certificate mechanism are shared across the applications; further
structural information enters through the same count variables.
The certificate takes the form of a rational identity in graph counts,
and each case ends in an explicit contradictory inequality. The
displayed formulas and the proved relations establish the argument;
the verification programs check its finite coefficient calculations.
For example, the reported six-point spherical upper bound for
$(540,147,66,30)$ is $548$~\cite[Table~3]{LY}; here five-point counts,
three reference vertices, and degrees $1,2$ give a contradiction.

The focus on least eigenvalue $-3$ is guided both by the candidate list
of Koolen and Gebremichel~\cite[Table~1]{KG} and by our repeated computational
experiments with spherical bounds and SRG counting. Their 2022
Table~1 lists twelve unresolved candidates among primitive strongly
regular graphs with least eigenvalue $-3$, $v>276$, and
$\mu\notin\{6,9\}$. Liao and Yu~\cite[Theorem~1]{LY} subsequently
excluded the four rows marked $\dagger$ in Table~\ref{tab:parameters}.
The ten parameter sets proved here are listed in that table; our aim is
to exhibit their nonexistence within a common semidefinite framework.

\begin{table}[htbp]
\centering\small
\caption{The ten excluded parameter sets. Here $r$ has multiplicity $d$,
and $p$ is the largest graph order in the certificate.
$\dagger$ marks the four cases previously treated by Liao--Yu~\cite{LY}.
Every certificate combines Gegenbauer positivity with strongly regular
graph counting information. The final column records whether the
additional structural information proved in Section~\ref{sec:structural}
is used.}
\label{tab:parameters}
\begin{tabular}{@{}rrrrrrcl@{}}
\toprule
$v$&$k$&$\lambda$&$\mu$&$r$&$d$&$p$&Additional structural information\\
\midrule
$351^\dagger$&140&73&44&32&26&6&Not needed\\
375&102&45&21&27&34&7&Used (Section~\ref{sec:structural})\\
405&132&63&33&33&30&7&Not needed\\
441&88&35&13&25&44&7&Used (Section~\ref{sec:structural})\\
476&133&60&28&35&34&6&Not needed\\
540&147&66&30&39&35&5&Not needed\\
$550^\dagger$&162&75&36&42&33&5&Not needed\\
575&112&45&16&32&46&7&Used (Section~\ref{sec:structural})\\
$703^\dagger$&182&81&35&49&37&5&Not needed\\
$1344^\dagger$&221&88&26&65&56&5&Not needed\\
\bottomrule
\end{tabular}
\end{table}

\begin{theorem}\label{thm:main}
No strongly regular graph exists for any parameter set in
Table~\ref{tab:parameters}.
\end{theorem}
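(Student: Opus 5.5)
The proof will go case by case through Table~\ref{tab:parameters}, using one construction for every row. Suppose a strongly regular graph $\Gamma$ with parameters $(v,k,\lambda,\mu)$ from the table exists. Project the standard basis vectors onto the eigenspace of the eigenvalue $r$ of multiplicity $d$ and normalise. This gives $v$ unit vectors in $\R^d$ with only two inner products: $\alpha=r/k$ for adjacent pairs and $\beta=-(1+r)/(v-k-1)$ for nonadjacent pairs, both computed from the idempotent. Every Gram matrix of a finite vertex subset is then determined by its induced subgraph. Applying Musin's positivity theorem at a fixed reference configuration of $m$ vertices, with $m\le p-2$, to the projected Gegenbauer kernel of degree $\ell$ (I will try $\ell=1,2$ first, as in the $v=540$ remark) gives a matrix of the form
\[
\sum_{(x,y)} w(x)w(y)\,G^{(d-m)}_\ell(\cdot)\succeq 0 .
\]
Each entry of this matrix is a linear combination of labelled induced-subgraph counts $N_H$ with $|H|\le p$. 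The weights $w$ are chosen according to the adjacency profile of $x$ and of $y$ relative to the reference vertices.

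Next come the linear relations. Double counting the ways a labelled induced subgraph on $j$ vertices extends by one vertex gives equations relating counts of order $j+1$ to counts of order $j$. The SRG conditions determine these extension counts exactly for $j\le 2$ and constrain them linearly beyond that. Normalising by $v$, the counts of order at most $2$ are explicit, and everything of higher order is a nonnegative unknown. After eliminating the equations, the positivity matrices become affine functions $M_i(z)$ of a free vector $z$ of nonnegative counts, and nonexistence reduces to infeasibility of $\{M_i(z)\succeq0,\ z\ge0\}$.

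Infeasibility is proved by a dual certificate. I solve the SDP numerically to get a dual solution, rationalise it, and project it onto the affine space cut out by the exact identities. The result is a set of rational PSD matrices $Y_i$ and a nonnegative rational vector $c$ satisfying
\[
\sum_i\langle Y_i,M_i(z)\rangle+c^\top z=-\delta<0
\]
identically in $z$. The left-hand side is nonnegative for any actual graph, which is the contradiction. The identity itself is checked by exact rational arithmetic.

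For $v=375,441,575$ this basic set of relations will probably be feasible. In those cases I would add the structural inputs proved in Section~\ref{sec:structural}: short-vector restrictions forcing certain counts to vanish, unique vertex extensions giving extra equalities, and integrality of common-neighbour counts giving linear inequalities. All of these are linear in the same variables and enter the certificate as further multipliers.

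The main obstacle is turning the floating-point dual into an exact certificate. The numerical duals are nearly singular, so naive rounding destroys either positive semidefiniteness or the identity. I would first try a facial reduction: restrict to the kernel forced by the relations and round inside it. Only if that fails would I enlarge $p$ or the set of reference configurations to gain slack.
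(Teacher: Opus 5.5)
Your framework is the paper's: Musin's projected kernels at reference copies of a graph $H$, weights depending only on adjacency profiles, the one-vertex extension equations, elimination, weak duality, and, for $v=375,441,575$, the structural inputs of Section~\ref{sec:structural}. But the proposal stops where the proof of Theorem~\ref{thm:main} actually begins. Nonexistence follows only from explicit rational identities that are verified exactly. Nothing in your argument shows that such an identity exists at any finite order, so ``the result is a set of rational PSD matrices $Y_i$'' is an assumption, not a step. This matters: the paper reports that its four-point relaxations excluded none of the tested parameter sets, and that for $(405,132,63,33)$ the five-point real-count relaxation is feasible at every Gegenbauer degree. The mathematical content of the proof is the list of certificates and their ingredients:
\begin{itemize}
\item $p=5$, $m=3$, $\ell\in\{1,2\}$, with strictly positive $b_j$, for $540,550,703,1344$;
\item $p=6$, $m=3,4$ for $351,476$;
\item $p=7$, up to five reference vertices and $\ell\in\{0,1,2\}$ for $405$, including $54$ degree-zero sums, which your escalation plan does not mention;
\item $p=7$ for the remaining three, together with the short-vector lemma (with its separate rank-boundary argument for $441$), the completion equations and the quartics $p_{a,b}$.
\end{itemize}

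Your certificate template is also too narrow for four of the ten cases. You impose only $z\ge0$ on the free coordinates and no upper bounds. The paper keeps $N_F(\xi)\ge0$ for \emph{every} graph type, including the eliminated ones. This makes the feasible set compact and yields $0\le y_F\le 2^{\binom{|F|}{2}}$; Section~\ref{sec:structural} adds the spanning-tree bounds $x_j<U_v$. The certificates for $405,375,441,575$ are not identities of your form $\sum_i\langle Y_i,M_i(z)\rangle+c^\top z=-\delta$ with $c\ge0$. The positive combination leaves a residual linear form in the free counts. Each coefficient is cancelled by $x_j\ge0$ or by the upper-bound term $U_j-x_j\ge0$ according to its sign, leaving very small margins such as $\Delta_{405}>559/10^{10}$ and $\Delta_{375}>7/10^{9}$; see \eqref{eq:405-counting-margin} and \eqref{eq:struct-margin}.

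The paper also handles your rounding problem differently. It fixes rational rank-one directions $f_i$, so each block $\eta_if_if_i^T$ is PSD by construction, and then solves an exact rational linear program for the nonnegative scalars $\eta_i,z_F$. Your facial-reduction idea is a possible alternative, but it is not shown to succeed.

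Finally, the kernel entries are linear in the global counts $N_F$ only after summing the inequality over all induced copies $\varphi:H\hookrightarrow\Gamma$, with $f$ independent of $\varphi$; at one fixed configuration they are not. Each reference Gram matrix $G_H$ must also be positive definite.
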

Seven cases are proved in Section~\ref{sec:direct} from Gegenbauer
positivity and counting information. For $375$, $441$, and $575$,
Section~\ref{sec:structural} adds structural information to the same
method. The displayed orders record the available certificates;
minimal orders are not asserted.

\paragraph{Data and code availability.}
The article, supplementary material, exact certificates, and verification
programs are available from the
\href{https://github.com/SelenaJXWANG/srg-gegenbauer-counting}{public companion repository}.
The certificate data used in the proofs are identified by the fixed
repository snapshot \href{https://github.com/SelenaJXWANG/srg-gegenbauer-counting/tree/85cd39b74c9ef8a61ab746deb1102af3eeae7e10}{\texttt{85cd39b}}~\cite{certificateData}.
Appendix~\ref{app:delivery} gives the certificate locations and the
command for reproducing their exact arithmetic checks.

Section~\ref{sec:method} develops the construction and its Lagrange
dual. Sections~\ref{sec:direct} and~\ref{sec:structural} give the
contradictions. Detailed rational coefficients and verification
conventions are collected in the appendices and the separate
$405$ certificate supplement~\cite{supp405}. Appendix~\ref{app:delivery}
identifies the accompanying certificate snapshot, its exact inputs,
and the common verification command.

\section{Gegenbauer positivity and counting information}\label{sec:method}
We express weighted Gegenbauer kernel sums in labelled induced-subgraph
counts and derive exact rational nonexistence certificates.

\subsection{Vertices, spherical vectors, and fixed reference vertices}
We use the normalized eigenspace embedding in
\cite[Section~4.1]{LY}, specialized to least eigenvalue $-3$.
Let $\Gamma=(V,E)$ be a primitive strongly regular graph with parameters
$(v,k,\lambda,\mu)$, adjacency matrix $\Adj(\Gamma)$, and nontrivial
eigenvalues $r,-3$, with $r$ of multiplicity $d$. All parameter sets considered
satisfy $\mu>0$ and $v-2k+\lambda>0$, which force the graph and its
complement to be connected. Then
\begin{equation}\label{eq:spectral}
 r=\lambda-\mu+3,\qquad k=3r+\mu,\qquad
 d=\frac{3(v-1)-k}{r+3}.
\end{equation}
These formulas follow from the quadratic equation for the nontrivial
eigenvalues and $\tr\Adj(\Gamma)=0$. The orthogonal projector onto the
$r$-eigenspace is
\[
 E_r=\frac{1}{r+3}\left(\Adj(\Gamma)+3I-\frac{k+3}{v}J\right).
\]
Here $I$ and $J$ are the $v\times v$ identity and all-one matrices.
The projector has diagonal $d/v$. Normalizing its columns gives unit vectors
$p_x\in S^{d-1}\subset\R^d$, one for each $x\in V$, with
\begin{equation}\label{eq:sphere}
 \langle p_x,p_y\rangle=
 \begin{cases}1,&x=y,\\ \beta,&x\sim_\Gamma y,\\
 \alpha,&x\ne y,\ x\not\sim_\Gamma y,
 \end{cases}
 \qquad
 \alpha=-\frac{k+3}{3v-k-3},\quad
 \beta=\frac{v-k-3}{3v-k-3}.
\end{equation}
Equivalently, $\beta=r/k$ and $\alpha=-(r+1)/(v-k-1)$.

Fix distinct vertices $r_0,\ldots,r_{m-1}$ whose vectors are linearly
independent, with $1\le m\le d-2$, and put
\[
 W=\operatorname{span}\{p_{r_0},\ldots,p_{r_{m-1}}\},\qquad
 D=d-m,\qquad X=V\setminus\{r_0,\ldots,r_{m-1}\}.
\]
For $x\in X$, write $p_x=p_x^\parallel+p_x^\perp$ for its orthogonal
decomposition in $W\oplus W^\perp$.

\subsection{The scalar kernel and its positivity}\label{subsec:kernel}
For $\ell\in\Z_{\ge0}$, let $P_\ell^{(D)}$ be the Gegenbauer polynomial
for $S^{D-1}$, normalized by $P_\ell^{(D)}(1)=1$. For $D=2$ use
$P_\ell^{(2)}(\cos\theta)=\cos(\ell\theta)$. Define
\begin{equation}\label{eq:kernel}
 K_{\ell,W}(x,y)=
 (\|p_x^\perp\|\|p_y^\perp\|)^\ell
 P_\ell^{(D)}\!\left(
 \frac{\langle p_x^\perp,p_y^\perp\rangle}
 {\|p_x^\perp\|\|p_y^\perp\|}\right).
\end{equation}
At a zero orthogonal component use its polynomial extension:
$K_{0,W}=1$, and $K_{\ell,W}(x,y)=0$ for $\ell\ge1$ if either
component is zero. Musin's construction and positivity theorem
\cite[Definition~2.1 and Theorem~3.1]{musin}
give
\begin{equation}\label{eq:positivity}
 \sum_{x,y\in X}a_xa_yK_{\ell,W}(x,y)\ge0
 \qquad\bigl((a_x)_{x\in X}\in\R^X\bigr).
\end{equation}
Thus Gegenbauer positivity means that the kernel matrix
$(K_{\ell,W}(x,y))_{x,y\in X}$ is positive semidefinite.
Every double sum uses ordered pairs and includes $x=y$.
Let $u_x\in\R^m$ be the coordinates of $p_x^\parallel$ in an
orthonormal basis of $W$, and put $t_{xy}=\langle p_x,p_y\rangle$.
Musin's variables then satisfy
$1-\|u_x\|^2=\|p_x^\perp\|^2$ and
$t_{xy}-u_x^Tu_y=\langle p_x^\perp,p_y^\perp\rangle$;
this gives exactly \eqref{eq:kernel}.

In degrees zero, one, and two the kernels are
\begin{equation}\label{eq:lowkernels}
 K_{0,W}=1,\quad K_{1,W}(x,y)=\langle p_x^\perp,p_y^\perp\rangle,
 \quad K_{2,W}(x,y)=
 \frac{D\langle p_x^\perp,p_y^\perp\rangle^2
 -\|p_x^\perp\|^2\|p_y^\perp\|^2}{D-1}.
\end{equation}
Writing $q_x=p_x^\perp$ only in the next display, their sums are
\begin{align}
 \sum_{x,y}a_xa_yK_{0,W}(x,y)&=\left(\sum_xa_x\right)^2,\nonumber\\
 \sum_{x,y}a_xa_yK_{1,W}(x,y)&=\left\|\sum_xa_xq_x\right\|^2,\label{eq:squares}\\
 \sum_{x,y}a_xa_yK_{2,W}(x,y)&=
 \frac{D}{D-1}\left\|\sum_xa_x
 \left(q_xq_x^T-\frac{\|q_x\|^2}{D}I_D\right)\right\|_F^2.\nonumber
\end{align}
The last equality follows from
$\langle uu^T,vv^T\rangle_F=\langle u,v\rangle^2$ by taking trace-free
parts. Thus these three degrees also have a direct proof of positivity.

\subsection{Labelled counts and one-vertex extensions}\label{subsec:counts}
All graphs are finite and simple. A labelled $t$-vertex graph $F$ has
vertex set $[t]=\{0,\ldots,t-1\}$. Define
\begin{equation}\label{eq:NF}
 N_F=\#\{\theta:[t]\hookrightarrow V:
 i\sim_Fj\ \Longleftrightarrow\ \theta(i)\sim_\Gamma\theta(j)
 \text{ for }i\ne j\}.
\end{equation}
A hooked arrow between graphs denotes an induced embedding. In particular,
$N_{K_1}=v$, $N_{K_2}=vk$, and $N_{K_3}=vk\lambda$.
Let $\mathcal G_t$ contain one labelled representative of each isomorphism
class on $[t]$. Isomorphic graphs have equal counts, and each vertex subset
inducing $F$ contributes $|\Aut F|$ embeddings. Hence
\begin{equation}\label{eq:totals}
 \sum_{F\in\mathcal G_t}\frac{N_F}{|\Aut F|}=\binom vt.
\end{equation}

For $H$ on $[h]$ and $B\subseteq[h]$, let $H_B$ be the graph on
$[h+1]$ obtained by making the new vertex $h$ adjacent exactly to $B$.
For fixed $B_0\subseteq[h]$ with $|B_0|\le2$, put
\begin{equation}\label{eq:extensionfactor}
 e(H,B_0)=\begin{cases}
 v-h,&B_0=\varnothing,\\
 k-\deg_H(i),&B_0=\{i\},\\
 \lambda-|\mathcal N_H(i)\cap \mathcal N_H(j)|,&B_0=\{i,j\},\ i\sim_Hj,\\
 \mu-|\mathcal N_H(i)\cap \mathcal N_H(j)|,&B_0=\{i,j\},\ i\not\sim_Hj.
 \end{cases}
\end{equation}
Here $\mathcal N_H(i)$ denotes the neighbourhood of $i$ in $H$.
\begin{proposition}\label{prop:extensions}
For every $H,B_0$ as above,
\begin{equation}\label{eq:extensions}
 e(H,B_0)N_H=\sum_{B_0\subseteq B\subseteq[h]}N_{H_B}.
\end{equation}
\end{proposition}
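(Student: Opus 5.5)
The plan is a double count of pairs $(\theta,w)$. Here $\theta:[h]\hookrightarrow V$ is an induced embedding of $H$, and $w\in V\setminus\theta([h])$ is a vertex whose neighbourhood in $\theta([h])$ contains $\theta(B_0)$. We count this set $\mathcal P$ in two ways. On one side we group by $\theta$, which should give $e(H,B_0)N_H$. On the other side we group by the full neighbourhood pattern $B=\{i\in[h]:w\sim_\Gamma\theta(i)\}$, which should give $\sum_{B\supseteq B_0}N_{H_B}$.

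\textbf{Grouping by $B$.} Extend $\theta$ to $\theta':[h+1]\to V$ by setting $\theta'(h)=w$. Then $\theta'$ is injective and preserves adjacency exactly, so it is an induced embedding of $H_B$, and $B\supseteq B_0$ by the choice of $w$. Conversely, every induced embedding of $H_B$ with $B_0\subseteq B\subseteq[h]$ restricts on $[h]$ to an induced embedding of $H$. Together with its value at $h$, this restriction is a pair in $\mathcal P$. These two maps are mutually inverse. Since the pattern $B$ is determined by the pair, the sets of pairs for distinct $B$ are disjoint. Hence $|\mathcal P|=\sum_{B_0\subseteq B\subseteq[h]}N_{H_B}$.

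\textbf{Grouping by $\theta$.} Fix $\theta$. We must show that the number of admissible $w$ is exactly $e(H,B_0)$, independent of $\theta$; this is the only step needing care. There are four cases.
\begin{itemize}
\item $B_0=\varnothing$: $w$ is any vertex outside the image, giving $v-h$ choices.
\item $B_0=\{i\}$: $w$ ranges over the $k$ neighbours of $\theta(i)$. We exclude those lying in the image, and these are exactly $\theta(j)$ with $j\sim_H i$, because $\theta$ is induced. This leaves $k-\deg_H(i)$ choices; note that $\theta(i)$ itself is not its own neighbour.
\item $B_0=\{i,j\}$ with $i\sim_H j$: the common neighbours of $\theta(i)$ and $\theta(j)$ in $\Gamma$ number $\lambda$. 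Those inside the image are $\theta(\mathcal N_H(i)\cap\mathcal N_H(j))$, again because $\theta$ is induced. This leaves $\lambda-|\mathcal N_H(i)\cap\mathcal N_H(j)|$ choices.
\item $B_0=\{i,j\}$ with $i\not\sim_H j$: the argument is the same, using $\mu$. It applies since $\theta(i)\ne\theta(j)$ are nonadjacent.
\end{itemize}
In both two-element cases, $\theta(i)$ and $\theta(j)$ are not themselves common neighbours of the pair, so nothing else is subtracted.

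Summing over the $N_H$ embeddings $\theta$ gives $|\mathcal P|=e(H,B_0)N_H$. Equating the two counts proves \eqref{eq:extensions}. The main point to check is the use of the strongly regular property: a fixed number of common neighbours for each adjacent or nonadjacent pair. This is exactly why the hypothesis $|B_0|\le 2$ is needed.
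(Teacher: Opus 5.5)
Your proposal is correct and follows the paper's proof: both double count pairs consisting of an induced copy of $H$ and an external vertex adjacent to the image of $B_0$. Each counts once by the embedding and once by the full adjacency set $B$, with extension and restriction as inverse bijections. Your case analysis showing that each embedding admits exactly $e(H,B_0)$ choices (the excluded in-image neighbours are precisely $\theta(\mathcal N_H(i))$ or $\theta(\mathcal N_H(i)\cap\mathcal N_H(j))$ because $\theta$ is induced) spells out the step the paper states in one line.
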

\begin{proof}
Count pairs $(\varphi,x)$ where $\varphi:H\hookrightarrow\Gamma$,
$x\notin\varphi([h])$, and $x$ is adjacent to every $\varphi(i)$
with $i\in B_0$. For each $\varphi$ there are $e(H,B_0)$ choices.
Alternatively set $B=\{i:x\sim_\Gamma\varphi(i)\}$. Extending
$\varphi$ by $h\mapsto x$ gives an induced embedding of exactly one
$H_B$, with $B\supseteq B_0$. Restriction reverses this construction.
\end{proof}
Isomorphic extensions occur with their full multiplicities; no automorphism
factor appears. This is the labelled form of the extension counts in
\cite[Proposition~3.2(i)--(iv)]{kovacikova}. For example,
\[
 (\lambda-1)N_{K_3}=N_{K_4-e}+N_{K_4}.
\]

\subsection{Weights indexed by adjacency sets}\label{subsec:weights}
Fix $H$ on $[m]$, where $1\le m\le d-2$, and form its Gram matrix
\begin{equation}\label{eq:GH}
 G_H=(1-\alpha)I_m+\alpha J_m+(\beta-\alpha)\Adj(H).
\end{equation}
Assume $G_H\succ0$, so every induced copy of $H$ has independent
spherical vectors.
For $\varphi:H\hookrightarrow\Gamma$, set
\[
 W_\varphi=\operatorname{span}\{p_{\varphi(0)},\ldots,p_{\varphi(m-1)}\},
 \qquad K_\ell^\varphi=K_{\ell,W_\varphi}.
\]
For an induced embedding $\iota:H\hookrightarrow G$, define
\begin{equation}\label{eq:sigma}
 \sigma_\iota(z)=\{i\in[m]:z\sim_G\iota(i)\},
 \qquad z\in V(G)\setminus\iota([m]).
\end{equation}
Let $\mathbf1_B$ be the characteristic column of $B\subseteq[m]$.
Order $2^{[m]}$ lexicographically by these columns. For $m=3$ the order is
\begin{equation}\label{eq:profileorder}
 \varnothing,\{2\},\{1\},\{1,2\},\{0\},\{0,2\},\{0,1\},\{0,1,2\}.
\end{equation}
Choose any $f:2^{[m]}\to\R$, independently of $\varphi$, and identify
it with its column of $2^m$ values. Apply \eqref{eq:positivity} with
$a_x=f(\sigma_\varphi(x))$ and then sum over $\varphi$:
\begin{equation}\label{eq:S}
 S(H,\ell,f):=\sum_{\varphi:H\hookrightarrow\Gamma}
 \sum_{x,y\in V\setminus\varphi([m])}
 f(\sigma_\varphi(x))f(\sigma_\varphi(y))K_\ell^\varphi(x,y)\ge0.
\end{equation}
There is no division by $|\Aut H|$; an empty sum is zero.
Each summand has support $\varphi([m])\cup\{x,y\}$, of size $m+1$
if $x=y$ and $m+2$ otherwise.

\subsection{Exact coefficients on abstract graphs}\label{subsec:coefficients}
Put
\[
 b(B)=\alpha\mathbf1+(\beta-\alpha)\mathbf1_B\in\R^m,
\]
where $\mathbf1$ is the all-one column.
For $\psi:H\hookrightarrow F$ and $a,b\in V(F)\setminus\psi([m])$,
write $b_a^\psi=b(\sigma_\psi(a))$ and let $t_{ab}$ be $1,\beta,\alpha$
according as $a=b$, $a\sim_F b$, or $a\ne b$ and $a\not\sim_F b$.
If
$P_\ell^{(D)}(t)=\sum_{j=0}^{\lfloor\ell/2\rfloor}
\gamma_{\ell,j}^{(D)}t^{\ell-2j}$, define
\begin{align}
 K_\ell^\psi(a,b)=\sum_{j=0}^{\lfloor\ell/2\rfloor}
 \gamma_{\ell,j}^{(D)}
 \bigl(t_{ab}-(b_a^\psi)^TG_H^{-1}b_b^\psi\bigr)^{\ell-2j}
 \bigl[(1-(b_a^\psi)^TG_H^{-1}b_a^\psi)
 (1-(b_b^\psi)^TG_H^{-1}b_b^\psi)\bigr]^j.
 \label{eq:formal}
\end{align}
This polynomial is defined even for unrealizable Gram data. For an actual
copy, orthogonal projection gives
\[
 \|p_x^\perp\|^2=1-b(\sigma_\varphi(x))^TG_H^{-1}b(\sigma_\varphi(x)),
 \quad
 \langle p_x^\perp,p_y^\perp\rangle
 =\langle p_x,p_y\rangle-b(\sigma_\varphi(x))^TG_H^{-1}b(\sigma_\varphi(y)),
\]
so \eqref{eq:formal} agrees with \eqref{eq:kernel}; see
\cite[Corollary~3.2]{musin} and \cite[Definition~2.11]{kaoyu}.
Positivity applies to the sums on actual
vectors, not to individual formal coefficients.

Define
\begin{equation}\label{eq:c}
 c_{H,\ell,f}(F)=\frac{1}{|\Aut F|}
 \sum_{\psi:H\hookrightarrow F}
 \sum_{\substack{a,b\in V(F)\setminus\psi([m])\\
 \psi([m])\cup\{a,b\}=V(F)}}
 f(\sigma_\psi(a))f(\sigma_\psi(b))K_\ell^\psi(a,b).
\end{equation}
The inner sum uses the diagonal when $|F|=m+1$ and both ordered pairs
of the two exterior vertices when $|F|=m+2$.
\begin{proposition}\label{prop:countingkernel}
For the actual labelled counts,
\begin{equation}\label{eq:countingkernel}
 S(H,\ell,f)=\sum_{F\in\mathcal G_{m+1}\cup\mathcal G_{m+2}}
 c_{H,\ell,f}(F)N_F\ge0.
\end{equation}
\end{proposition}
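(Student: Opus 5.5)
The plan is to regroup the double sum \eqref{eq:S} according to the induced subgraph carried by the support of each summand. Nonnegativity is then immediate from \eqref{eq:S}, which came from \eqref{eq:positivity}. So the content is the identity $S(H,\ell,f)=\sum_F c_{H,\ell,f}(F)N_F$.

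\textbf{Step 1: replace each summand by its formal value.} Each summand of $S$ is indexed by a triple $(\varphi,x,y)$ with $\varphi:H\hookrightarrow\Gamma$ and $x,y\in V\setminus\varphi([m])$. Let $U=\varphi([m])\cup\{x,y\}$, of size $m+1$ or $m+2$. By the discussion after \eqref{eq:formal}, $K_\ell^\varphi(x,y)$ equals the formal polynomial \eqref{eq:formal}. That polynomial depends only on three pieces of data: the profiles $\sigma_\varphi(x),\sigma_\varphi(y)$, whether $x=y$, and the adjacency of $x,y$ in $\Gamma$. This is where $G_H\succ0$ is used: the projections are computed through $G_H^{-1}$, and the Gram data of $\varphi([m])$ is exactly $G_H$ because $\varphi$ is induced. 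All of this data is read off from the induced subgraph $\Gamma[U]$. The factor $f(\sigma_\varphi(x))f(\sigma_\varphi(y))$ likewise depends only on $\Gamma[U]$.

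\textbf{Step 2: parametrise triples by labelled copies.} Fix $F\in\mathcal G_{m+1}\cup\mathcal G_{m+2}$ on $[t]$. Consider pairs $(\theta,(\psi,a,b))$ where:
\begin{itemize}
\item $\theta:F\hookrightarrow\Gamma$ is an induced embedding;
\item $\psi:H\hookrightarrow F$;
\item $a,b\notin\psi([m])$ with $\psi([m])\cup\{a,b\}=[t]$.
\end{itemize}
Such a pair maps to the triple $(\theta\circ\psi,\theta(a),\theta(b))$. By Step 1, the summand of $S$ at this triple equals the $(\psi,a,b)$ term of \eqref{eq:c}, computed in $F$; this holds because $\theta$ is an isomorphism onto $\Gamma[U]$, so it preserves profiles and adjacencies.

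Conversely, take a triple $(\varphi,x,y)$ with support $U$. Then $\Gamma[U]$ is isomorphic to exactly one $F\in\mathcal G_t$. The isomorphisms $\theta:F\to\Gamma[U]$, i.e.\ the embeddings with image $U$, number exactly $|\Aut F|$. For each such $\theta$, the preimage $(\theta^{-1}\circ\varphi,\theta^{-1}(x),\theta^{-1}(y))$ is the unique pair mapping to the triple.

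Hence each triple is hit exactly $|\Aut F|$ times, and
\[
S(H,\ell,f)=\sum_F\frac{1}{|\Aut F|}\sum_{\theta:F\hookrightarrow\Gamma}\sum_{(\psi,a,b)}(\text{term})=\sum_F c_{H,\ell,f}(F)N_F,
\]
since the inner sum over $(\psi,a,b)$ does not depend on $\theta$. Both cases are covered by the support condition in \eqref{eq:c}: when $t=m+1$ we must have $a=b$, and when $t=m+2$ the pairs $(a,b)$ and $(b,a)$ both occur.

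\textbf{Main obstacle.} The only delicate point is the bookkeeping in Step 2: checking that the automorphism factor exactly cancels the overcount, and that no triple is omitted or counted twice. In particular, relabelling by elements of $\Aut F$ permutes the $\psi$'s as well, and the correspondence above must account for this. The bijection between $\Aut F\times\{\text{triples}\}$ and the set of pairs $(\theta,(\psi,a,b))$ settles it.
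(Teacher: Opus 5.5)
Your proposal is correct and follows essentially the paper's argument: you regroup the summands of $S(H,\ell,f)$ by their support and use the transport $(\psi,a,b)\mapsto(\theta\circ\psi,\theta(a),\theta(b))$, which preserves profiles and kernels without requiring any invariance of $f$. The only difference is cosmetic: the paper fixes one isomorphism $\theta:F\to\Gamma[U]$ per support set and counts $N_F/|\Aut F|$ such sets, whereas you sum over all $N_F$ embeddings and divide by the $|\Aut F|$-fold overcount, which is the same bookkeeping.
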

\begin{proof}
Group the summands of \eqref{eq:S} by their support $U$. If
$\Gamma[U]\cong F$, choose one isomorphism $\theta:F\to\Gamma[U]$.
Then $(\psi,a,b)\mapsto(\theta\circ\psi,\theta(a),\theta(b))$
is a bijection between the triples in \eqref{eq:c} and those supported
on $U$ in \eqref{eq:S}. It preserves the profiles and kernels, without
requiring invariance of $f$. Each $U$ contributes
$|\Aut F|c_{H,\ell,f}(F)$, and there are $N_F/|\Aut F|$ such subsets.
\end{proof}
\subsection{The semidefinite model and counting elimination}\label{subsec:sdp}
Choose an integer counting order $1\le p\le v$. Let
$\mathcal G_{\le p}=\bigcup_{t=1}^p\mathcal G_t$ and
$N=(N_F)_{F\in\mathcal G_{\le p}}$.
Collect \eqref{eq:extensions} for $1\le h<p$, $|B_0|\le2$ and
\eqref{eq:totals} for $1\le t\le p$ into
\begin{equation}\label{eq:AN}
 AN=b,\qquad
 \mathcal P=\{N\in\R^{\mathcal G_{\le p}}:AN=b,\ N_F\ge0\}.
\end{equation}
Here $b$ is the right-hand-side column of the counting equations.
The matrix $A$ has one column per graph type, with isomorphic extensions
combined. Every actual SRG supplies a point of $\mathcal P$; integrality
is relaxed. By \eqref{eq:totals},
\begin{equation}\label{eq:countbound}
 0\le N_F\le |\Aut F|\binom{v}{|F|},
\end{equation}
so $\mathcal P$ is compact, possibly empty.

Choose finitely many pairs $(H,\ell)$ with $H\in\mathcal G_m$,
$1\le m\le\min(p-2,d-2)$, $G_H\succ0$, and $\ell\in\Z_{\ge0}$.
Since \eqref{eq:c} is quadratic in $f$,
there is a unique symmetric $2^m\times2^m$ matrix $C_{H,\ell}(F)$ such that
\[
 f^TC_{H,\ell}(F)f=c_{H,\ell,f}(F).
\]
Set $C_{H,\ell}(F)=0$ at other graph orders, and put
\begin{equation}\label{eq:M}
 M_{H,\ell}(N)=\sum_F N_FC_{H,\ell}(F),\qquad
 S(H,\ell,f;N)=f^TM_{H,\ell}(N)f.
\end{equation}
For actual counts, $S(H,\ell,f;N)=S(H,\ell,f)$; the semicolon notation
extends the same linear expression to formal count vectors.
This adjacency-profile indexing is the two-distance, exterior-point
form of the label-indexed multipoint constraints in
\cite[Theorem~2.13]{kaoyu}. All weights at once give
\begin{equation}\label{eq:primal}
 \begin{array}{ll}
 \text{minimize}_{N}&0\\
 \text{subject to}&AN=b,\quad N_F\ge0\quad(F\in\mathcal G_{\le p}),\\
 &M_{H,\ell}(N)\succeq0\quad\text{for each selected }(H,\ell).
 \end{array}
\end{equation}
Every hypothetical graph gives a feasible point, so infeasibility excludes
the parameters. A dual certificate supplies a fixed finite family of
weights giving the contradiction.

\paragraph{Eliminating the counting equations.}\label{subsec:elimination}
Row-reduce $[A\mid b]$, retaining any inconsistent row. If inconsistent,
counting alone gives the contradiction. Otherwise let
$q=|\mathcal G_{\le p}|-\rank A$ and choose the non-pivot counts
$N_{F_1},\ldots,N_{F_q}$ as coordinates $\xi_1,\ldots,\xi_q$.
Then all solutions, and only those solutions, have the form
\begin{equation}\label{eq:affine}
 N_F(\xi)=a_{F0}+\sum_{j=1}^q a_{Fj}\xi_j,
 \qquad N_{F_j}(\xi)=\xi_j,\qquad \xi\in\R^q.
\end{equation}
Indeed, after ordering pivot columns first, the reduced system is
$(I\ R)(N_{\rm piv},N_{\rm free})^T=c$; thus
$N_{\rm piv}=c-R\xi$ and $N_{\rm free}=\xi$.
Abbreviate $M_{H,\ell}(N(\xi))$ by $M_{H,\ell}(\xi)$, and, for
$0\le j\le q$, set
\begin{equation}\label{eq:Mj}
 M_{H,\ell}^{(j)}=\sum_Fa_{Fj}C_{H,\ell}(F),\qquad
 M_{H,\ell}(\xi)=M_{H,\ell}^{(0)}+
 \sum_{j=1}^q\xi_jM_{H,\ell}^{(j)}.
\end{equation}
The reduced primal minimizes $0$ over $\xi\in\R^q$ subject to
$N_F(\xi)\ge0$ for \emph{every} $F$ and
$M_{H,\ell}(\xi)\succeq0$ for every selected pair.

\subsection{The Lagrange dual and exact certificates}\label{subsec:dual}
Use the convention of Boyd and Vandenberghe
\cite[Sections~5.8.1 and~5.9.1]{boyd}: write the inequalities as
$-N_F(\xi)\le0$ and $-M_{H,\ell}(\xi)\preceq0$, with multipliers
$z_F\ge0$ and $Y_{H,\ell}\succeq0$, where $Y_{H,\ell}$ has the same
size as $M_{H,\ell}$. With the trace inner product
$\langle Y,M\rangle=\tr(YM)$, the Lagrangian is
\begin{align}
 L(\xi;z,Y)&=-\sum_Fz_FN_F(\xi)-
 \sum_{H,\ell}\langle Y_{H,\ell},M_{H,\ell}(\xi)\rangle\nonumber\\
 &=-c_0(z,Y)-\sum_{j=1}^q\xi_jc_j(z,Y),\label{eq:lagrangian}\\
 c_j(z,Y)&:=\sum_Fz_Fa_{Fj}+
 \sum_{H,\ell}\langle Y_{H,\ell},M_{H,\ell}^{(j)}\rangle.
 \nonumber
\end{align}
The definition of $c_j$ includes $j=0$.
The equalities are already imposed by \eqref{eq:affine}; taking the
infimum over $\R^q$ gives
\[
 g(z,Y)=\inf_\xi L(\xi;z,Y)=
 \begin{cases}-c_0(z,Y),&c_j(z,Y)=0\ (1\le j\le q),\\
 -\infty,&\text{otherwise}.
 \end{cases}
\]
Consequently the dual is
\begin{equation}\label{eq:dual}
 \begin{array}{ll}
 \text{maximize}_{z,Y}&-c_0(z,Y)\\
 \text{subject to}&c_j(z,Y)=0\quad(1\le j\le q),\\
 &z_F\ge0,\quad Y_{H,\ell}\succeq0.
 \end{array}
\end{equation}
For a primal feasible $\xi$, positive semidefiniteness gives
$g(z,Y)\le L(\xi;z,Y)\le0$. A positive dual value therefore excludes
a graph and, by scaling, can be normalized to $1$. We seek
\begin{equation}\label{eq:normaldual}
 c_0(z,Y)=-1,\qquad c_j(z,Y)=0\ (1\le j\le q),\qquad
 z_F\ge0,\quad Y_{H,\ell}\succeq0.
\end{equation}
This sufficient certificate uses only weak duality; no strict feasibility
or converse assertion about dual attainment is assumed.

\paragraph{From dual matrices to scalar identities.}
Index the rank-one terms of all positive semidefinite dual blocks by
a single finite set of indices $i$, recording the reference graph
$H_i$ and degree $\ell_i$ of each term. Then
\[
 Y_{H,\ell}=\sum_{i:\,(H_i,\ell_i)=(H,\ell)}\eta_i f_if_i^T,
 \qquad f_i\in\R^{2^{|H_i|}},\quad\eta_i>0.
\]
Thus \eqref{eq:normaldual} becomes
\begin{equation}\label{eq:certificate}
 \boxed{\displaystyle
 \sum_i\eta_iS(H_i,\ell_i,f_i;N(\xi))+
 \sum_Fz_FN_F(\xi)=-1\qquad(\xi\in\R^q).}
\end{equation}
For actual counts, every term on the left is nonnegative. The following
sufficient criterion isolates the mathematical conclusion from the search.

\begin{theorem}[Exact nonexistence certificate]\label{thm:exact-certificate}
Fix a parameter set and the counting and kernel constraints above.
For finitely many admissible pairs $(H_i,\ell_i)$ and rational
directions $f_i$, put $S_i(N)=S(H_i,\ell_i,f_i;N)$.
Suppose there are rational multipliers $\eta_i,z_F\ge0$ and a rational
number $\varepsilon>0$ such that
\begin{equation}\label{eq:exact-certificate-theorem}
 \sum_i\eta_iS_i(N)+\sum_Fz_FN_F=-\varepsilon
 \qquad\text{whenever }AN=b.
\end{equation}
Then no strongly regular graph with the prescribed parameters exists.

The same conclusion holds if proved linear equalities are appended to
$AN=b$ and the identity includes additional terms
$\theta_a\mathcal I_a(N)$ on the left, where $\theta_a\in\Q_{\ge0}$
and each rational affine expression $\mathcal I_a(N)$ is proved
nonnegative for the counts of every hypothetical graph with these
parameters. In that case the identity is required on the augmented
affine solution space.
\end{theorem}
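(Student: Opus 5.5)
The argument is by contradiction. Suppose $\Gamma$ is a strongly regular graph with the prescribed parameters, and let $N^\Gamma=(N_F)_{F\in\mathcal G_{\le p}}$ be its vector of actual labelled induced-subgraph counts. We evaluate the identity \eqref{eq:exact-certificate-theorem} at $N=N^\Gamma$ and show its left side is nonnegative while its right side is $-\varepsilon<0$.

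First, $N^\Gamma$ lies in the affine space where the identity holds. Proposition~\ref{prop:extensions} gives every extension equation \eqref{eq:extensions} for $1\le h<p$ and $|B_0|\le2$. The total-count relation \eqref{eq:totals} holds because each $t$-subset of $V$ induces exactly one isomorphism type $F$ and contributes $|\Aut F|$ embeddings. Hence $AN^\Gamma=b$. In the augmented version, the appended linear equalities are by hypothesis proved for every hypothetical graph, so $N^\Gamma$ also lies in the augmented solution space. The identity therefore holds at $N^\Gamma$ with value $-\varepsilon$.

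Next, each term on the left is nonnegative at $N^\Gamma$. Trivially $N^\Gamma_F\ge0$ and $z_F\ge0$, so $\sum_F z_FN_F^\Gamma\ge0$. For the kernel terms, admissibility of $(H_i,\ell_i)$ means $G_{H_i}\succ0$ and $1\le m\le d-2$. So every induced copy $\varphi$ of $H_i$ spans an $m$-dimensional $W_\varphi$ with $D=d-m\ge2$, and Musin's positivity \eqref{eq:positivity} applies to each $\varphi$ with $a_x=f_i(\sigma_\varphi(x))$. Summing over $\varphi$ gives $S(H_i,\ell_i,f_i)\ge0$ in \eqref{eq:S}.

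The one step needing care is identifying this geometric sum with the formal linear expression $S_i(N^\Gamma)=f_i^TM_{H_i,\ell_i}(N^\Gamma)f_i$ that appears in the certificate. I would invoke Proposition~\ref{prop:countingkernel}: the formal coefficients \eqref{eq:formal} agree with the actual kernels \eqref{eq:kernel} on realized configurations, via the orthogonal-projection formulas. Regrouping by support then yields \eqref{eq:countingkernel}. By the definition of $C_{H,\ell}(F)$ and \eqref{eq:M}, this equals $f_i^TM_{H_i,\ell_i}(N^\Gamma)f_i$, so $S_i(N^\Gamma)\ge0$. With $\eta_i\ge0$, $\sum_i\eta_iS_i(N^\Gamma)\ge0$.

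In the augmented case, each $\theta_a\mathcal I_a(N^\Gamma)\ge0$ by hypothesis. Thus the whole left side is $\ge0$, contradicting $-\varepsilon<0$, and no such graph exists.

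The argument uses only weak duality, in the form of this direct sign comparison, and needs no strict feasibility. Rationality of the data matters only for exact verification, not for the logic.
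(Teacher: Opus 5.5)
Your proof is correct and matches the paper's argument: the actual counts satisfy every imposed equality, each $S_i$ is nonnegative by Musin's positivity together with Proposition~\ref{prop:countingkernel}, the count terms and the added terms $\theta_a\mathcal I_a$ are nonnegative, and so $0\le-\varepsilon<0$. One small precision: admissibility also requires $m\le p-2$, and this is what makes all graphs of orders $m+1$ and $m+2$ coordinates of $N$, so that the sum in \eqref{eq:countingkernel} really equals $f_i^TM_{H_i,\ell_i}(N^\Gamma)f_i$.
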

\begin{proof}
The counts of a hypothetical graph satisfy all imposed equalities.
Each $S_i(N)$ is nonnegative by Proposition~\ref{prop:countingkernel},
each $N_F$ is nonnegative, and each additional $\mathcal I_a(N)$ is
nonnegative by hypothesis. The identity would give
$0\le-\varepsilon<0$.
\end{proof}
Count upper bounds, written as $U-x\ge0$, are examples of the
additional expressions. Section~\ref{sec:structural} also supplies
linearized integer inequalities and additional counting equalities.

\paragraph{Recovering rational weights.}
Choose rational directions $f_i$ and compute
\[
 \kappa_{ij}=f_i^TM_{H_i,\ell_i}^{(j)}f_i,\qquad
 S(H_i,\ell_i,f_i;N(\xi))=\kappa_{i0}+\sum_{j=1}^q\kappa_{ij}\xi_j.
\]
Then solve the rational linear feasibility problem
\begin{equation}\label{eq:rationallp}
 \eta_i,z_F\ge0,\qquad
 \sum_i\eta_i\kappa_{ij}+\sum_Fz_Fa_{Fj}
 =\begin{cases}-1,&j=0,\\0,&1\le j\le q.\end{cases}
\end{equation}
A real solution of this rational linear system implies a rational one.
Indeed, fix the coordinates that vanish at a real feasible point.
The remaining rational equalities define a nonempty rational affine
space by row reduction. Its rational points are dense, and sufficiently
close rational points preserve positivity of the remaining coordinates.
This implication assumes feasibility after the rational directions
have been fixed: neither a numerical dual solution nor its rational
approximation guarantees that feasibility. We retain only multipliers
satisfying every equality and sign condition exactly; each block
$\eta_if_if_i^T$ is then positive semidefinite by construction.

Verify the parametrization by
\begin{equation}\label{eq:affineverify}
 A(a_{F0})_F=b,\qquad A(a_{Fj})_F=0\ (1\le j\le q),\qquad
 \rank(a_{Fj})_{F,1\le j\le q}=q=|\mathcal G_{\le p}|-\rank A.
\end{equation}
Together with \eqref{eq:c} and \eqref{eq:rationallp}, these checks verify
the hypotheses of Theorem~\ref{thm:exact-certificate}, independently
of the numerical search.

\section{Exact contradictions from the counting and kernel constraints}\label{sec:direct}
The spherical data are determined by \eqref{eq:spectral}--\eqref{eq:sphere}.
We give seven exact contradictions. The displayed functions and the
coefficients in the appendices and Supplement~\cite{supp405} specify
each certificate completely.

\subsection{Four five-point certificates}
Use $p=5$, $m=3$, and degrees $\ell\in\{1,2\}$.  The $52$ graph types
on at most five vertices give a counting system of rank $48$ for each
of $v=540,550,703,1344$.  Its four non-pivot columns, in lexicographic
edge-string order, are
\begin{equation}\label{eq:direct-free-types}
 F_1=K_5\setminus P_3,\quad F_2=K_5\setminus 2K_2,\quad
 F_3=K_5\setminus K_2,\quad F_4=K_5,
 \qquad \xi_j=N_{F_j}.
\end{equation}
Here $K_5\setminus P_3$ means deletion of the two edges of a
three-vertex path.  These are the original labelled counts, so
$\xi_j\ge0$.  Their automorphism orders are $4,8,12,120$, respectively.
The corresponding ten-bit edge strings are
$0011111111$, $0111111011$, $0111111111$, and $1111111111$.
They use the edge order specified in Appendix~\ref{app:verification}.
The total-count equations in Section~\ref{sec:method} may be retained: they already
follow from the empty-set extension equations and $N_{K_1}=v$.

For three reference vertices, an edge string lists $01,02,12$.
Thus $001$ is one edge and an isolated vertex, $011$ is a path, and
$111$ is a triangle; their automorphism orders are $2,2,6$.
The functions in Table~\ref{tab:direct-functions} are given by all eight
values in exactly the order of Section~\ref{sec:method}:
\[
 \varnothing,\{2\},\{1\},\{1,2\},\{0\},\{0,2\},\{0,1\},\{0,1,2\}.
\]
Index the rows within each $v$-block of Table~\ref{tab:direct-functions}
starting at $1$. A row displaying $qf_i$ defines $f_i$ by dividing
every entry by the row's denominator $q$.
All reference Gram matrices are positive definite.  Indeed, in the
four cases $2\max(|\alpha|,|\beta|)<1$, so strict diagonal dominance
proves this for every three-vertex graph.

\begin{table}[htbp]\centering\small
\caption{Explicit rational functions for the four five-point proofs.
The last column supplies $t_i>0$ in \eqref{eq:direct-contradiction}.}
\label{tab:direct-functions}
\setlength{\tabcolsep}{4pt}
\begin{tabular}{rrrrlr}\toprule
$v$&$H_i$&$\ell_i$&$q$&$qf_i$&$t_i$\\\midrule
540&001&1&10&$(10,6,6,0,7,0,0,0)$&102\\
&111&2&10&$(3,-4,-4,10,-4,10,10,0)$&80\\
&011&1&10&$(10,4,9,0,9,0,0,0)$&61\\
&011&1&10&$(10,5,8,0,8,0,0,0)$&37\\
&011&1&10&$(10,4,8,0,8,0,0,0)$&22\\
\midrule
550&001&1&10&$(1,2,2,-5,4,-7,-7,10)$&11\\
&011&1&10&$(1,1,-5,4,-5,4,10,-8)$&22\\
&011&2&10&$(2,-10,-2,3,-2,3,5,1)$&42\\
&111&1&10&$(-3,5,5,-8,5,-8,-8,10)$&1\\
&111&2&10&$(2,-10,-10,9,-10,9,9,1)$&105\\
\midrule
703&001&1&10&$(1,2,2,-5,4,-7,-7,10)$&1\\
&001&2&10&$(-1,-4,-4,-5,-6,6,6,10)$&4\\
&011&2&10&$(1,-10,-2,5,-2,5,5,-2)$&14\\
&111&2&10&$(2,-10,-10,9,-10,9,9,0)$&2\\
\midrule
1344&001&1&30&$(30,17,17,0,24,0,0,0)$&11\\
&011&2&30&$(21,7,20,30,20,30,30,0)$&9\\
&011&1&30&$(20,-7,30,0,30,0,0,0)$&2\\
&011&1&30&$(21,-6,30,0,30,0,0,0)$&4\\
&001&1&30&$(30,17,17,0,25,0,0,0)$&1\\
\bottomrule\end{tabular}\end{table}

Substitute each displayed $f_i$ into the scalar coefficient formula of
Section~\ref{sec:method}, and then substitute the exact counting parametrization
$N=N(\xi)$.  Rational arithmetic gives
\begin{equation}\label{eq:direct-contradiction}
 \sum_i\frac{t_i}{|\operatorname{Aut}H_i|}
 S(H_i,\ell_i,f_i;N(\xi))
 =-b_0-\sum_{j=1}^4b_j\xi_j,
\end{equation}
where all five $b_j$ are strictly positive; their values are in
Appendix~\ref{app:five-coefficients}, Tables~\ref{tab:direct-coeff-a} and~\ref{tab:direct-coeff-b}.
Every summand on the left is nonnegative for actual graph counts,
whereas the right side is strictly negative.

Moving the count terms to the left and dividing by $b_0$ gives
the certificate of Theorem~\ref{thm:exact-certificate} with
$\varepsilon=1$, $\eta_i=t_i/(b_0|\Aut H_i|)$,
$z_{F_j}=b_j/b_0$ for $1\le j\le4$, and all other $z_F=0$.

\paragraph{The contradiction for each parameter set.}
For each $v$-block, abbreviate
\[
 E_{v,i}=\frac{S(H_i,\ell_i,f_i;N)}{|\operatorname{Aut}H_i|}\ge0.
\]
The positivity holds for every hypothetical graph with the indicated
parameters. Write $b_{v,j}$ for the corresponding entries of
Tables~\ref{tab:direct-coeff-a}--\ref{tab:direct-coeff-b}.
The four instances of \eqref{eq:direct-contradiction} now give the
following separate numerical contradictions, using only $\xi_j\ge0$.
For $(540,147,66,30)$,
\begin{align}
0&\le 102E_{540,1}+80E_{540,2}+61E_{540,3}
       +37E_{540,4}+22E_{540,5}\nonumber\\
 &= -b_{540,0}-\sum_{j=1}^4b_{540,j}\xi_j
 \le-\frac{574630684892568}{6496825}<0.\label{eq:contradiction-540}
\end{align}
For $(550,162,75,36)$,
\begin{align}
0&\le11E_{550,1}+22E_{550,2}+42E_{550,3}
       +E_{550,4}+105E_{550,5}\nonumber\\
 &\le-\frac{3153820063124750}{4826151}<0.\label{eq:contradiction-550}
\end{align}
For $(703,182,81,35)$,
\begin{align}
0&\le E_{703,1}+4E_{703,2}+14E_{703,3}+2E_{703,4}\nonumber\\
 &\le-\frac{827108257923964177}{2718487200}<0.\label{eq:contradiction-703}
\end{align}
For $(1344,221,88,26)$,
\begin{align}
0&\le11E_{1344,1}+9E_{1344,2}+2E_{1344,3}
       +4E_{1344,4}+E_{1344,5}\nonumber\\
 &\le-\frac{11861558480040192}{18036575}<0.\label{eq:contradiction-1344}
\end{align}
These four certificates use only degrees $1$ and $2$.
After the counting substitutions, the displayed positive combination
has a strictly negative constant term and nonpositive coefficients
on the four remaining nonnegative counts.

\subsection{The six-point cases 351 and 476}
For $(351,140,73,44)$ and $(476,133,60,28)$ we retain counts through
$p=6$ and the degree-one and degree-two kernels on three and four
reference vertices. In these two cases, respectively,
$3\max(|\alpha|,|\beta|)=24/35$ and $15/19$, both less than $1$.
Thus every reference Gram matrix of order at most four is positive
definite by strict diagonal dominance. This is the finite combination of the constraints
of Section~\ref{sec:method} for $m=3,4$.  There are $208$ graph types, counting rank
$185$, and $23$ free counts.  No extra local exclusion or integrality
condition is used.

Let $S_{v,i}=S(H_i,\ell_i,f_i)$, where the rational functions are
listed in Appendix~\ref{app:directions}. Appendix~\ref{app:weights}
specifies positive integers $L_v,\omega_{v,i},\zeta_{v,j}$ and the
count graphs $G_{v,j}$. Put
\[
 \eta_{v,i}=\frac{\omega_{v,i}}{vL_v},\qquad
 z_{v,j}=\frac{\zeta_{v,j}}{vL_v}.
\]
Expanding \eqref{eq:c} and applying the counting equations gives,
for $(351,140,73,44)$,
\begin{equation}\label{eq:six351-explicit}
 0\le\sum_{i=1}^{17}\eta_{351,i}S_{351,i}
       +\sum_{j=1}^{7}z_{351,j}N_{G_{351,j}}=-1<0,
\end{equation}
and, for $(476,133,60,28)$,
\begin{equation}\label{eq:six476-explicit}
 0\le\sum_{i=1}^{16}\eta_{476,i}S_{476,i}
       +\sum_{j=1}^{8}z_{476,j}N_{G_{476,j}}=-1<0.
\end{equation}
These are two instances of Theorem~\ref{thm:exact-certificate}
with $\varepsilon=1$.

\subsection{The seven-point certificate for (405,132,63,33)}
\label{subsec:405-counting}
For these parameters the restricted eigenvalues are $33,-3$,
the embedding dimension is $d=30$, and the full spherical Gram matrix is
\[
 G=\frac{9I+3\operatorname{Adj}(\Gamma)-J}{8},\qquad
 \alpha=-\frac18,\quad\beta=\frac14.
\]
The constraints of Section~\ref{sec:method} exclude this case with
counting order $p=7$, at most five reference vertices, and degrees
$\ell=0,1,2$.  

\begin{proposition}\label{prop:405-counting}
There is no strongly regular graph with parameters $(405,132,63,33)$.
An exact certificate uses $67$ nonnegative count terms and $107$
weighted kernel sums: $54$ of degree zero, $50$ of degree one, and three
of degree two.
\end{proposition}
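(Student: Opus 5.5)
The plan is to verify the hypotheses of Theorem~\ref{thm:exact-certificate} for $(v,k,\lambda,\mu)=(405,132,63,33)$ at counting order $p=7$, with reference graphs $H$ on $m\le5$ vertices and degrees $\ell\in\{0,1,2\}$. No structural relation beyond the counting system is appended.

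\textbf{Admissibility.} Here $\alpha=-1/8$ and $\beta=1/4$. For $m\le4$, strict diagonal dominance gives $G_H\succ0$, since $(m-1)\max(|\alpha|,|\beta|)\le3/4<1$. For $m=5$ this bound fails when $H$ is dense, because $4\beta=1$. So I would check $G_H\succ0$ directly for each five-vertex $H$ in the certificate. Note that $8G_H=9I+3\Adj(H)-J$ and $\lambda_{\min}(\Adj(H))\ge-2$ unless $H$ contains a large induced star or similar. For each such $H$ it suffices to see that $9+3\lambda_{\min}(\Adj(H))>0$ on $\mathbf 1^\perp$ and that the $\mathbf 1$ direction is harmless. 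In any case this is a finite rational check, and $m\le5\le d-2=28$.

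\textbf{Counting system.} Set up $AN=b$ from Proposition~\ref{prop:extensions} for $1\le h<7$ with $|B_0|\le2$, together with \eqref{eq:totals}. Row-reduce over $\Q$. The system should be consistent, and I would record the free counts $\xi_1,\dots,\xi_q$ and check the parametrization via \eqref{eq:affineverify}. The set $\mathcal G_{\le7}$ has $1+1+2+4+11+34+156=209$ graph types.

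\textbf{Kernel coefficients.} For each of the $107$ listed triples $(H_i,\ell_i,f_i)$, compute $c_{H_i,\ell_i,f_i}(F)$ exactly from \eqref{eq:formal} and \eqref{eq:c}, using $D=30-m$. Substituting $N=N(\xi)$ then gives affine forms $\kappa_{i0}+\sum_j\kappa_{ij}\xi_j$.

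\textbf{The identity.} With the supplement's rational multipliers $\eta_i\ge0$ and $z_F\ge0$ on $67$ count terms, verify the linear system \eqref{eq:rationallp}. That is, the $\xi$-coefficients cancel and the constant equals $-1$. Theorem~\ref{thm:exact-certificate} then gives $0\le-1$, so no such graph exists. Proposition~\ref{prop:countingkernel} supplies nonnegativity of each $S_i$, including the degree-zero sums $(\sum_x a_x)^2$.

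\textbf{Main obstacle.} The hard step is finding the directions $f_i$ and multipliers at all. The plan is as follows.
\begin{itemize}
\item Solve the reduced SDP dual \eqref{eq:dual} numerically.
\item Eigendecompose each $Y_{H,\ell}$ and round the leading eigenvectors to low-denominator rationals.
\item Solve the exact rational LP \eqref{eq:rationallp}.
\item If that LP is infeasible, enlarge the direction family by adding rounded perturbations and standard indicator-type profiles. Also add the count terms $N_F$ as slack, and iterate.
\end{itemize}
Once an exact feasible point is found, the remaining verification is mechanical rational arithmetic, delegated to the companion code.
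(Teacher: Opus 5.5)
\textbf{The gap is in your final step.} You propose to check that the supplement's multipliers satisfy \eqref{eq:rationallp} exactly, so that with $67$ count terms every free coefficient cancels and the constant is $-1$. The $405$ certificate does not work this way, and that check would fail.

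In the paper, counts are scaled as $y_F=2^{\binom h2}N_F/(405)_h$ and parametrized as $y=Px$ with $173$ free coordinates. After this substitution, the positive combination of the $174$ sources is $c_0+\sum_{j=1}^{173}c_jx_j$, and the residual coefficients $c_j$ are nonzero. The ingredient you are missing is an a priori box bound on the free counts. Summing the empty-set extension equation gives $\sum_{B\subseteq[h]}y_{H_B}=2^hy_H$, so by induction on $|F|$ one gets $0\le y_F\le 2^{\binom{|F|}{2}}$, equivalently $N_F\le(405)_{|F|}$. The paper then pays:
\begin{itemize}
\item each $c_j\ge0$ with the term $-x_j\le0$;
\item each $c_j<0$ with the term $x_j-U_j\le0$, where $U_j=2^{\binom{|C_j|}{2}}$.
\end{itemize}
The contradiction is the strict margin $\Delta_{405}=c_0-E_{405}>559\cdot10^{-10}$, with $E_{405}=-\sum_{c_j<0}c_jU_j$. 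This uses the second clause of Theorem~\ref{thm:exact-certificate}, with the count bounds as proved extra inequalities.

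This payment step is what absorbs the rounding error that a numerically derived combination leaves on $173$ coordinates. Your fallback of enlarging the family and iterating is a search heuristic; it does not guarantee exact cancellation with $67$ count terms. You could repair the plan differently. The same induction shows that, modulo the extension equations, $U_F-y_F=2^h(U_H-y_H)+\sum_{B'\ne B}y_{H_{B'}}$ is a nonnegative combination of counts. So the payment can be rewritten as further $z_F$ terms, and an exact solution of \eqref{eq:rationallp} then exists for the same directions. But that certificate uses additional count terms, not the $67$-term, constant $-1$ identity you describe, and your proposal would have to supply this argument rather than assume exact cancellation.

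\textbf{There is also a concrete error in the setup.} $\mathcal G_{\le7}$ has $1+2+4+11+34+156+1044=1252$ graph types, not $209$; your sum covers orders $0$ through $6$. The $1044$ seven-vertex types are indispensable. For a five-vertex anchor, every distinct-pair term of $S(H,\ell,f)$ is supported on seven vertices. Without these types, for example, the three degree-two sums with $H=K_{2,3}$ cannot be expressed in your count variables. The paper's system has $4135$ ordinary extension equations, $1078$ elimination steps and $173$ free coordinates.

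The remaining ingredients are fine: the exact positive-definiteness check for the selected five-vertex anchors, $D=30-m$, and the trivial nonnegativity of the degree-zero sums.
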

\begin{proof}[Exact certificate]
For an integer $h\ge0$, write $(v)_h=v(v-1)\cdots(v-h+1)$,
with $(v)_0=1$. Use the positive rescaling
\[
 y_F=\frac{2^{\binom h2}N_F}{(405)_h},\qquad h=|F|.
\]
All $1252$ graph types through order seven are retained, with
$y_{K_1}=1$.  The $4135$ ordinary one-vertex extension equations
admit $1078$ triangular elimination steps, leaving $173$ free
scaled graph counts and the constant coordinate. Write
$y=Px$, with $y\in\R^{1252}$,
$x=(1,x_1,\ldots,x_{173})^T$, and $P\in\Q^{1252\times174}$,
where $x_j=y_{C_j}$, $1\le j\le173$, for the free graph types.
The supplied rational matrix $P$ is checked against every
extension equation, and its free rows form an identity matrix.

For an anchor $H$ of order $m$, the matrix used in the stored
certificate is the positive multiple
\[
 \widehat M_{H,\ell}(y)=
 \frac{2^{\binom{m+1}{2}}}{(405)_{m+1}}M_{H,\ell}(N).
\]
This normalization follows directly by separating diagonal and distinct-vertex terms
in the definition of $S(H,\ell,f)$; the ratio of the
pair and single-vertex factors is $(404-m)/2^{m+1}$.
Each selected anchor Gram matrix is positive definite. The integer
directions are defined on all $2^m$ profiles; the three degree-two
energies use $H=K_{2,3}$.

Let $R_i(y)\le0$ be the $174$ selected sources, numbered as in
Supplement~\cite{supp405}: the first $67$ are $-y_F$, and the
remaining $107$ are $-f_i^T\widehat M_{H_i,\ell_i}(y)f_i$.
The kernel functions retain the source indices $i=68,\ldots,174$.
Every direction $f_i$ and every full positive rational coefficient
$w_i$ is printed there, together with formulas that reconstruct
all row coefficients using the ordinary counting equations.

For example, source $i=76$ has $H=K_{2,3}$ with bipartition
$\{0,1\}\cup\{2,3,4\}$ and degree $\ell=2$. Its $32$ values,
read row by row in binary lexicographic profile order, are
\[
 {\begin{array}{rrrrrrrr}
 117610&-293584&-293584&-14274&-293584&-14274&-14274&0\\
 -34929&-18491&-18491&265514&-18491&265514&265514&52070\\
 -34929&-18491&-18491&265514&-18491&265514&265514&52070\\
 0&-13820&-13820&-290273&-13820&-290273&-290273&201821
 \end{array}}
\]
and the exact coefficient of its source inequality is
\[
 w_{76}=\frac{386746425034437}{53675406395459206542968750000000}.
\]
This one source is part of the complete positive combination; its
positivity alone is not claimed to give the contradiction.

Applying the stated counting substitutions and collecting coefficients
in the full combination gives the exact identity
\[
 \sum_{i=1}^{174}w_iR_i(Px)=c_0+\sum_{j=1}^{173}c_jx_j.
\]
Here the $c_j$ are local to this combination of nonpositive sources.
The empty-set extension equations and count nonnegativity imply
$0\le y_F\le2^{\binom{|F|}{2}}$ by induction on $|F|$.
Put $U_j=2^{\binom{|C_j|}{2}}$ and
$E_{405}=-\sum_{c_j<0}c_jU_j$. Exact rational arithmetic yields the
following convenient bounds on the two sides of the residual payment:
\[
 c_0>\frac{570}{10^{10}},\qquad
 0\le E_{405}<\frac{11}{10^{10}}.
\]
In particular, adding explicit nonpositive count-bound terms produces
\begin{align}
 0\;&\ge\sum_{i=1}^{174}w_iR_i(Px)
       +\sum_{c_j\ge0}c_j(-x_j)
       +\sum_{c_j<0}(-c_j)(x_j-U_j)\notag\\
 &=c_0-E_{405}=\Delta_{405}
   >\frac{559}{10^{10}}>0.
 \label{eq:405-counting-margin}
\end{align}
After multiplying by $-1$, this is Theorem~\ref{thm:exact-certificate}
with $\varepsilon=\Delta_{405}$ and the proved count-bound inequalities.
Supplement~\cite{supp405} specifies all
$174$ summands and the counting recursion used to derive $c_0$ and
$c_j$; the supplementary verifier independently reconstructs all rows
and checks both strict rational bounds above.
\end{proof}

The five-point real-count relaxation is feasible at every Gegenbauer
degree; Supplement~\cite[Section~S2]{supp405} gives an exact witness and proof.

\section{Adding structural information for three further exclusions}
\label{sec:structural}

We now treat
\[
 (375,102,45,21),\qquad(441,88,35,13),\qquad(575,112,45,16).
\]
We add integer common-neighbour inequalities and unique-extension
identities to the Gegenbauer positivity and counting information of
Section~\ref{sec:method}. Both are expressed in the same labelled counts
$N_F$. These proved conditions strengthen the same counting system.

\subsection{Integer common-neighbour counts}
\label{subsec:integer-triples}
Let $H$ be a labelled graph on three vertices.  For an induced embedding
$\phi:H\hookrightarrow\Gamma$, let $t_\phi$ be the number of vertices
outside $\phi([3])$ adjacent to all three selected vertices.  Write
$(t)_j=t(t-1)\cdots(t-j+1)$ and $(t)_0=1$, and define
\[
 T_j(H)=\sum_{\phi:H\hookrightarrow\Gamma}(t_\phi)_j.
\]
Let $\mathcal E_j(H)$ be the family of labelled graphs obtained from $H$
by adding $j$ new labelled vertices, each adjacent to all of $H$, with
every possible edge pattern among the new vertices.  Counting ordered
distinct common neighbours gives
\begin{equation}\label{eq:struct-moment}
 T_j(H)=\sum_{F\in\mathcal E_j(H)}N_F.
\end{equation}
Here $T_0(H)=N_H$; the labels record the ordered neighbours, so no
factorial or automorphism factor occurs. Isomorphic extensions retain
their full multiplicity.

For integers $a,b$, the polynomial
\begin{equation}\label{eq:struct-quartic}
 p_{a,b}(t)=(t-a)(t-a-1)(t-b)(t-b-1)
\end{equation}
is nonnegative at every integer $t$.  Thus
$\sum_\phi p_{a,b}(t_\phi)\ge0$.  Expanding in the basis
$(t)_0,\ldots,(t)_4$ and using~\eqref{eq:struct-moment} gives a
linear inequality in counts of graphs on at most seven vertices.
For example,
\[
 p_{1,4}(t)=(t)_4-6(t)_3+20(t)_2-40(t)_1+40,
\]
so the $375$ case uses
\[
 T_4(\overline{K_3})-6T_3(\overline{K_3})
 +20T_2(\overline{K_3})-40T_1(\overline{K_3})
 +40T_0(\overline{K_3})\ge0.
\]
The complete list of quartics with positive multipliers in the final
certificates is
\begin{center}
\begin{tabular}{rrrr}
\toprule
$v$&$H$&$a$&$b$\\
\midrule
375&$\overline{K_3}$&1&4\\
441&$\overline{K_3}$&0&4\\
441&$K_3$&18&28\\
575&$\overline{K_3}$&1&6\\
575&$\overline{K_3}$&1&7\\
575&$P_3$&8&14\\
\bottomrule
\end{tabular}
\end{center}
Each inequality holds at every reference copy before averaging.

\subsection{An integral Gram matrix and its short vectors}
\label{subsec:struct-short}
The restricted eigenvalues are $r$ and $-3$, with $r$ of multiplicity $d$.
Put
\[
 c=r+3,\qquad \rho=\frac{k+3}{v},\qquad
 B=\operatorname{Adj}(\Gamma)+3I.
\]
The standard spherical Gram matrix satisfies
$(3-\rho)G=B-\rho J$.  Choose a unit vector $e_0$ perpendicular
to the spherical span and set
\[
 b_x=\sqrt{3-\rho}\,p_x+\sqrt\rho\,e_0,
 \qquad \delta=\sqrt\rho\,e_0.
\]
Then $B$ is the Gram matrix of the $b_x$, and
\begin{equation}\label{eq:struct-integral}
 \langle b_x,b_y\rangle=
 \begin{cases}3&x=y,\\1&x\sim y,\\0&x\ne y,\ x\not\sim y.
 \end{cases}
\end{equation}
Moreover $\delta=v^{-1}\sum_xb_x$ and
$\langle b_x,\delta\rangle=\|\delta\|^2=\rho$.
The SRG identities give
\begin{equation}\label{eq:struct-B}
 B^2=cB+\mu J,\qquad
 K:=B-\rho J,\qquad K^2=cK,\qquad
 \operatorname{rank}K=d.
\end{equation}

Consider the integer span $\Lambda=\sum_x\mathbb Zb_x$.
For $q=\sum_xh_xb_x$ with $h_x\in\mathbb Z$, define
\[
 \tau=\sum_xh_x,\qquad Q=\|q\|^2,\qquad z_x=\langle q,b_x\rangle.
\]
Since $\langle q,\delta\rangle=\rho\tau$, the value of $\tau$ is independent
of the representation of $q$. Equation~\eqref{eq:struct-B} gives
\begin{equation}\label{eq:struct-moments}
 \sum_xz_x=(k+3)\tau,\qquad
 \sum_xz_x^2=cQ+\mu\tau^2,\qquad
 \operatorname{Adj}(\Gamma)z=rz+\mu\tau\mathbf1.
\end{equation}
Also
\begin{equation}\label{eq:struct-parity}
 Q\ge\rho\tau^2,\qquad Q\equiv\tau\pmod2,
\end{equation}
where the first assertion is Cauchy's inequality with $\delta$ and
the second follows by expanding the integral Gram form modulo two.
Since every $z_x$ is an integer,
\begin{equation}\label{eq:struct-gap}
 0\le\sum_xz_x(z_x-1)=cQ+\mu\tau^2-(k+3)\tau.
\end{equation}

\begin{lemma}\label{lem:struct-shell}
For each of the three parameter sets in this section, $\Lambda$
contains no vector of squared norm one or two.  Every vector of
squared norm three and coefficient sum one equals $b_x$ for a unique
vertex $x$.
\end{lemma}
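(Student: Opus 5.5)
The plan is to work entirely with the integer data attached to a lattice vector $q=\sum_xh_xb_x$: the coefficient sum $\tau$, the norm $Q$, and the integer column $z=(z_x)$. These are governed by \eqref{eq:struct-moments}, \eqref{eq:struct-parity} and \eqref{eq:struct-gap}. Applying \eqref{eq:struct-gap} to both $q$ and $-q$, or equivalently using $\sum_xz_x^2\ge\sum_x|z_x|$, gives
\[
 cQ+\mu\tau^2\ \ge\ (k+3)|\tau|,
\]
and \eqref{eq:struct-parity} restricts $\tau$ to finitely many values with $\tau\equiv Q\pmod 2$ and $\rho\tau^2\le Q$. The other basic tool is Cauchy--Schwarz, $|z_x|\le\sqrt{3Q}$. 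Its refinement is that for $z_x=\pm j$ the vector $q\mp b_x$ has squared norm $Q+3-2j$ and coefficient sum $\tau\mp1$, so smaller shells feed into larger ones.

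\emph{Norm one.} Here $\tau$ is odd and $\rho\tau^2\le1$. In all three cases $\rho<1/3$, so this leaves only $\tau=\pm1$. Then the displayed inequality reads $c+\mu\ge k+3$. Since $k=3r+\mu$, this is $r+3+\mu\ge 3r+\mu+3$, which is false because $r>0$. So norm one is excluded uniformly.

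\emph{Norm two.} Now $\tau$ is even with $\rho\tau^2\le2$. For $\tau=\pm2$ the inequality $2c+4\mu\ge2(k+3)$ fails in each case (for $375$: $144<210$). The remaining case $\tau=0$ needs care. Then $\sum z_x=0$, $\sum z_x^2=2c$, and $\operatorname{Adj}(\Gamma)z=rz$, so $z$ is an integer $r$-eigenvector. Cauchy gives $|z_x|\le2$. If $z_x=\pm2$, then $q\mp b_x$ has norm one, which was just excluded. Hence $z\in\{-1,0,1\}^V$ with exactly $c$ entries equal to $+1$ and $c$ equal to $-1$.

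To kill this configuration I would use the eigen-equation locally. Let $P$ and $M$ be the supports of the $+1$ and $-1$ entries. For $x\in P$ we have $|\mathcal N(x)\cap P|-|\mathcal N(x)\cap M|=r$, and similarly for $M$ and for the zero set. Combining these with $\lambda,\mu$ double counts on the induced subgraphs on $P$ and $M$ (for example, $|P|=c=r+3$ forces $P$ to be nearly a clique of size $r+3$ inside a graph whose cliques are bounded by the $-3$ eigenvalue via Delsarte/Hoffman) should produce a contradiction. Alternatively, one can observe that for $x\in P$ the vector $b_x-q$ has norm $3$ and coefficient sum $1$. This links the case to the norm-three analysis below, provided that analysis is done without circularity.

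\emph{Norm three, $\tau=1$.} Cauchy gives $z_x\le3$, with equality iff $q=b_x$. Uniqueness is clear, since $\langle b_x,b_y\rangle\le1<3$ for $x\ne y$. So suppose no $z_x=3$. Then $z_x=\pm2$ would make $q\mp b_x$ a norm-two vector, and $z_x=-3$ would force $q=-b_x$ with $\tau=-1$. Hence $z\in\{-1,0,1\}^V$, and the moments $\sum z=k+3$, $\sum z^2=3c+\mu$ fix the numbers of $+1$ and $-1$ entries: $(k+3+3c+\mu)/2$ and $(3c+\mu-k-3)/2=3$. The eigen-equation $\operatorname{Adj}z=rz+\mu\mathbf1$ then prescribes neighbour sums $r+\mu$, $\mu$ and $\mu-r$ on the three level sets.

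I expect the main obstacle to be exactly these two residual $\{-1,0,1\}$ configurations: $\tau=0$ at norm two, and $\tau=1$ at norm three. The moment identities alone do not refute them. I would attack them by a second local count over the small set $M$ of $-1$ entries together with the $\lambda,\mu$ conditions. For norm three, $M$ has only three vertices. Every vertex of $M$ needs neighbour sum $\mu-r<0$, while the number of $-1$ neighbours is at most $2$, and this already forces strong constraints. Where this stays inconclusive, I would fall back on a short finite check for each parameter set separately.
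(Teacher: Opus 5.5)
\textbf{Gap: the case $Q=2$, $\tau=0$ is left unresolved.} Your handling of norm one, of norm two with $\tau=\pm2$, and of norm three (once norm two is known to be empty) is sound and agrees with the paper. In the norm-three case the contradiction is immediate: $\mu-r<-2$ holds in all three cases, while at most two other coordinates equal $-1$. There is one slip. To exclude $\tau=\pm3$ at norm one you need $\rho>1/9$, not $\rho<1/3$; likewise $\rho>1/8$ excludes $\tau=\pm4$ at norm two. Both hold.

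Neither route you suggest for $Q=2$, $\tau=0$ works:
\begin{itemize}
\item The Delsarte--Hoffman bound gives $\omega\le 1+k/3=r+1+\mu/3$. This is at least $c=r+3$ whenever $\mu\ge6$, so cliques of size $c$ are allowed; for $375$ the bound is $35$ against $c=30$.
\item The passage to $b_x-q$ is genuinely circular. That vector has coordinate exactly $2$ at $x$. The norm-three analysis can dispose of such a coordinate only by using emptiness of the $Q=2$, $\tau=0$ shell.
\item A ``short finite check'' is not available at these orders.
\end{itemize}
Since the norm-three classification depends on this case, it is also unproved as it stands.

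\textbf{The missing idea is a global rank argument.} Your eigen-equation gives, for $p\in P$,
$\#\{y\in P\setminus\{p\}:y\not\sim p\}+|\mathcal N(p)\cap M|=2$.
So every vertex of $P$ has at least $r$ neighbours in $P$. Two non-adjacent vertices of $P$ would then have at least $2r-(c-2)=r-1>\mu$ common neighbours. Hence $P$ and $M$ are cliques, and the cross-edges form a $2$-regular bipartite graph $C$ with $m\le\lfloor c/2\rfloor$ even cycles.

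The contradiction then comes from the condition $\operatorname{rank}K=d$, not from further local counting. With $s=\mathbf1_P+\mathbf1_M$ and $e=\mathbf1_P-\mathbf1_M$, the principal submatrix is
$K_{P\cup M}=2I+\operatorname{Adj}(C)+(\tfrac12-\rho)ss^T+\tfrac12ee^T$.
Here $2I+\operatorname{Adj}(C)$ is PSD with one kernel vector per cycle, and $e$ lies in that kernel. The $ss^T$ term is PSD because $\rho<1/2$. Therefore
$\operatorname{rank}K_{P\cup M}\ge 2c-m+1\ge 2c-\lfloor c/2\rfloor+1$.
This gives $46>34$ for $375$ and $54>46$ for $575$, a contradiction.

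For $441$ the bound is only $43\le44$, so $m=13,14$ survive. The paper needs a separate, substantial argument for these. It uses the tight-frame identity $\sum_xu_xu_x^T=28I$, shows that external profiles are balanced on every cycle, computes residual heights, applies a congruence modulo $7$ and $23$ when $m=13$, and uses the count $\sum_x n_2(x)=98$ when $m=14$. Some argument of this strength is required to finish the $441$ case.
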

\begin{proof}
All three sets satisfy $1/8<\rho<1/2$ and $\mu<r-2$.
If $Q=1$,~\eqref{eq:struct-parity} forces $\tau=\pm1$.
Changing sign gives $\tau=1$, for which the right side of
\eqref{eq:struct-gap} is $-2r<0$.
If $Q=2$, only $\tau=0,\pm2$ are possible.  For $\tau=2$ that
right side is $2\mu-4r<0$, and sign change handles $\tau=-2$.

Suppose that $Q=2$ and $\tau=0$.  Cauchy's inequality gives
$|z_x|\le\sqrt6$.  If $z_x=2$ or $-2$, respectively $q-b_x$
or $q+b_x$ would have squared norm one.  Hence
$z_x\in\{-1,0,1\}$.  By~\eqref{eq:struct-moments}, the sets
$P=\{x:z_x=1\}$ and $N=\{x:z_x=-1\}$ both have size $c$.
For $p\in P$, the equation $\operatorname{Adj}(\Gamma)z=rz$
gives
\begin{equation}\label{eq:struct-defect}
 |\{x\in P\setminus\{p\}:x\not\sim p\}|
 +|\mathcal N_\Gamma(p)\cap N|=2.
\end{equation}
The analogous equation holds in $N$.  If two vertices of $P$ were
nonadjacent, each would have at least $r$ neighbours in $P$; these
would give at least $2r-(c-2)=r-1>\mu$ common neighbours.
Thus $P$ and $N$ are cliques, and the cross-edges form a bipartite
$2$-regular graph $C$, a disjoint union of even cycles.

Let $s$ be the all-one column on $P\cup N$, let $e$ be $1$ on $P$
and $-1$ on $N$, and set $R=\operatorname{diag}(e)$.  Then
\begin{equation}\label{eq:struct-root-rank}
 K_{P\cup N}=R(2I-\operatorname{Adj}(C))R
 +(1/2-\rho)ss^T+\tfrac12ee^T.
\end{equation}
If $C$ has $m$ cycles, the first term has rank $2c-m$.
The vector $e$ is a nonzero kernel vector of that term, so its
rank rises by one after adding $ee^T/2$; the other term is PSD.
Since every cycle has length at least four,
\[
 \operatorname{rank}K_{P\cup N}
 \ge2c-\lfloor c/2\rfloor+1.
\]
This is $46>34$ for $375$ and $54>46$ for $575$, contradicting
\eqref{eq:struct-B}.  For $441$, the remaining rank-boundary
cases are excluded in the next subsection, using only identities
already established above.

Once squared norms one and two are excluded, let $Q=3$, $\tau=1$.
Now $|z_x|\le3$.  A coordinate $3$ gives $q=b_x$; a coordinate
$-3$ would give $q=-b_x$, incompatible with $\tau=1$.
A coordinate $2$ or $-2$ would give a squared-norm-two vector.
If none of these occur, $z_x\in\{-1,0,1\}$ and
\[
 |\{x:z_x=-1\}|=
 \frac{3c+\mu-(k+3)}2=3.
\]
At a negative coordinate, the last identity in
\eqref{eq:struct-moments} says that the sum of neighbouring
coordinates is $\mu-r<-2$.  But only two other negative
coordinates are available, a contradiction.  Finally distinct
vertices have distinct vectors by~\eqref{eq:struct-integral},
which proves uniqueness.
\end{proof}

\subsection{The rank-boundary argument for 441}
\label{subsec:441-boundary}
Here $c=28$, $\rho=13/63$, and $d=44$.
For the centered vectors $u_x=b_x-\delta=\sqrt{3-\rho}\,p_x$, the nonzero eigenvalues
of their Gram matrix $K$ are all $28$, so
\begin{equation}\label{eq:struct-tight}
 \sum_xu_xu_x^T=28I_{44}.
\end{equation}
Suppose that the unresolved vector with $Q=2$, $\tau=0$ exists.
The preceding argument gives cliques $P,N$ of size $28$ with
cross-edges a union of $m$ even cycles of lengths
$2d_1,\ldots,2d_m$, where $d_i\ge2$ and $\sum_i d_i=28$.
On these $56$ vertices put
\[
 B_0=2I+\operatorname{Adj}(C),\qquad
 s=\mathbf1_P+\mathbf1_N,\qquad e=\mathbf1_P-\mathbf1_N,
 \qquad \beta_0=\tfrac12-\rho.
\]
For $H=\Gamma[P\cup N]$, their centered Gram matrix is
$K_H=B_0+\beta_0ss^T+ee^T/2$.
The matrix $B_0$ is PSD, with one alternating kernel vector per
cycle.  The vector $s$ belongs to its range and $e$ is a nonzero
kernel vector.  Hence
\[
 \operatorname{rank}K_H=56-m+1\le44,
\]
so only $m=13$ and $m=14$ remain.

Since $\tau=0$, the vector $q$ belongs to the centered spherical span.
For an external vertex $x$, let $a\in\{0,1\}^{56}$ record its
neighbours in $P\cup N$.  Its neighbour totals in $P$ and $N$
coincide.  Indeed, applying~\eqref{eq:struct-tight} to the putative
vector $q$ gives $\sum_Pu_y-\sum_Nu_y=28q$, whose inner product
with $u_x$ is zero. The singular Gram-matrix range condition
\cite[Appendix~A.5.5]{boyd} requires $a-\rho s$ to be orthogonal to
$\ker K_H$. This kernel consists of combinations of the
alternating cycle vectors orthogonal to $e$.  Consequently the
neighbour difference between the two sides of cycle $i$, divided
by $d_i$, is independent of $i$.  Its total is zero, so $a$ is
balanced on each cycle and lies in the range of $B_0$.

Set
\[
 T=|\mathcal N_\Gamma(x)\cap P|=|\mathcal N_\Gamma(x)\cap N|,
 \qquad E=a^TB_0^+a,
\]
where $B_0^+$ is the inverse on its range and zero on its kernel.
The positive eigenvalues of $B_0$ are at most $4$, giving
$E\ge\|a\|^2/4=T/2$.
Since $B_0^+s=s/4$, a rank-one inverse calculation gives the
squared residual length after projecting $u_x$ away from the
anchor span:
\begin{align}
 \gamma_x
 &=3-\rho-E+\rho T-\frac{\rho^2c}{2}
   +\frac{\beta_0(T-c\rho)^2}{4(1+\beta_0c/2)}\notag\\
 &=\frac{37T^2+104T+6896}{2576}-E.
 \label{eq:struct-441-height}
\end{align}
The $ee^T/2$ term contributes nothing because the profile is balanced.

If $m=13$, the cycles are twelve $C_4$ and one $C_8$, or eleven
$C_4$ and two $C_6$.  Their span has dimension $44$, so
$\gamma_x=0$.  The complete possible energies for a balanced
Boolean profile on a single cycle are
\[
\begin{array}{c|c|l}
\text{cycle}&\text{profiles}&\text{energies}\\\hline
C_4&6&0,\ 3/4,\ 1\\
C_6&20&0,\ 5/6,\ 4/3,\ 3/2,\ 2\\
C_8&70&0,\ 7/8,\ 3/2,\ 15/8,\ 2,\ 23/8,\ 7/2
\end{array}
\]
These rational values follow by inverting $B_0+ww^T$ on the
single cycle, where $w$ is its alternating column, and evaluating
all balanced Boolean columns.  Thus the total $E$ belongs to
$\frac18\mathbb Z$ or $\frac1{12}\mathbb Z$.
As $2576=16\cdot7\cdot23$,~\eqref{eq:struct-441-height} forces
$37T^2+104T+6896$ to vanish modulo $7$ and $23$.  But
\[
 37T^2+104T+6896\equiv2(T-2)^2\pmod7,
 \qquad
 37T^2+104T+6896\equiv14(T-16)^2\pmod{23}.
\]
For $0\le T\le28$ this forces $T=16$, and hence $E=7<T/2$,
a contradiction.

If $m=14$, all cycles are $C_4$ and their span has dimension $43$.
Let $n_1$ count cycles where the profile selects one vertex from
each side, and $n_2$ those where it selects all four vertices.
Then
\[
 n_1,n_2\ge0,\quad n_1+n_2\le14,\qquad
 T=n_1+2n_2,\qquad E=\tfrac34n_1+n_2.
\]
Substitution in~\eqref{eq:struct-441-height}, over these $120$
integer pairs, gives exactly the following possibilities with
$\gamma_x\ge0$:
\[
\begin{array}{c|c}
n_2& n_1\\\hline
0&0,1,2,3,4\\
1&0,1,2\\
2&0,1\\
3&0\\
13&0,1\\
14&0
\end{array}
\]
In each of these $14$ cases direct rational substitution gives
\begin{equation}\label{eq:struct-441-lower}
 \gamma_x\ge\frac{11}{161}+\frac{237}{8372}n_2(x).
\end{equation}
On each $C_4$, the two vertices in $P$ have $28$ common neighbours
inside $P\cup N$, hence $35-28=7$ outside.  The balance condition
on that cycle makes these precisely its contributions to $n_2$.
Consequently $\sum_{x\notin P\cup N}n_2(x)=14\cdot7=98$.
Summing~\eqref{eq:struct-441-lower} over the $385$ external vertices
yields
\[
 \sum_{x\notin P\cup N}\gamma_x
 \ge385\frac{11}{161}+98\frac{237}{8372}
 =\frac{17389}{598}>28.
\]
Projection of~\eqref{eq:struct-tight} onto the one-dimensional
orthogonal complement of the anchor span instead gives
$\sum_x\gamma_x=28$.  The anchor residuals vanish, so this is the
required contradiction and completes the boundary step in
Lemma~\ref{lem:struct-shell}.

\subsection{Unique extensions and new equations in the counts}
\label{subsec:struct-completions}
For a labelled graph $H$, put $B_H=3I+\operatorname{Adj}(H)$.
By Lemma~\ref{lem:struct-shell}, an integer column $h$ with
\begin{equation}\label{eq:struct-completion}
 \mathbf1^Th=1,\qquad h^TB_Hh=3,\qquad
 B_Hh\in\{0,1\}^{|H|}
\end{equation}
forces a unique external vertex at every induced embedding of $H$.
Indeed, the vector sum is a vertex vector; its inner products with
the selected vertices give exactly the indicated adjacency profile,
and none is three, so it is external.  A second vertex with that
profile has inner product $h^TB_Hh=3$ with the same vector sum
and must coincide with it.  If $F$ is this labelled one-vertex
extension, restriction and completion are inverse bijections, so
\begin{equation}\label{eq:struct-bijection}
 N_H=N_F.
\end{equation}
Likewise an integer column with $h^TB_Hh\in\{1,2\}$ proves
$N_H=0$.

For example, suppose $x_1,x_2$ and $y_1,y_2,y_3$ induce
$K_{2,3}$.  The vector
\[
 b_{y_1}+b_{y_2}+b_{y_3}-b_{x_1}-b_{x_2}
\]
has coefficient sum one and squared norm $15-12=3$.
It is the unique missing vertex completing this copy to $K_{3,3}$.
With the labelled-count convention of Section~2, the resulting
identity is simply $N_{K_{2,3}}=N_{K_{3,3}}$.

One can obtain an equation on seven-point counts even when the
unique extension has eight vertices.  Suppose an eight-vertex
graph $F$ has an integer column $a$ satisfying
\[
 B_Fa=0,\qquad \mathbf1^Ta=0.
\]
For an index $i$ with $a_i=\pm1$, put $H_i=F-i$ and
$h^{(i)}=-a_i(a_j)_{j\ne i}$.  The displayed null relation shows
that $h^{(i)}$ satisfies~\eqref{eq:struct-completion}, with
profile equal to that of the deleted vertex.  Hence, if also
$a_j=\pm1$,
\begin{equation}\label{eq:struct-cards}
 N_{H_i}=N_F=N_{H_j}.
\end{equation}
Eliminating $N_F$ leaves an equation involving only seven-vertex counts.

\subsection{The three exact contradiction certificates}
\label{subsec:struct-certificates}
The source inequalities are count nonnegativity, the degree $0,1,2$
kernel sums $S(H,\ell,f)$, and the six quartic inequalities above.
The zero-count and completion equations follow from
Lemma~\ref{lem:struct-shell} and~\eqref{eq:struct-completion}.
The rational source lists are identified in Appendix~\ref{app:delivery}; their conversion
to the notation of Section~\ref{sec:method} is given in
Appendix~\ref{app:structural-normalization}.

Use the positive scaling
\begin{equation}\label{eq:struct-scaled}
 y_F=\frac{2^{\binom h2}N_F}{(v)_h},\qquad h=|F|,
 \qquad (v)_h=v(v-1)\cdots(v-h+1).
\end{equation}
Thus $y_{K_1}=1$. Ordinary one-vertex equations eliminate a vertex
of degree at most two by
\[
 N_{H_B}=e(H,B)N_H-\sum_{B\subsetneq B'\subseteq V(H)}N_{H_{B'}},
 \qquad |B|\le2.
\]
Each substitution decreases the number of vertices or increases the
number of edges at fixed order. Through order seven, it leaves the
singleton and $173$ graph types of minimum degree at least three.
The proved zero-count and completion equations then have rank $63$,
leaving $110$ free scaled graph counts $x_j=y_{C_j}$ and the constant $1$.
All substitutions are rational consequences of the displayed equations.

Every $C_j$ is connected: two components of minimum degree three would
require at least eight vertices. Choosing a spanning tree gives
\[
 N_{C_j}\le vk^{h-1},\qquad
 0\le x_j\le
 \frac{2^{\binom h2}k^{h-1}}{(v-1)_{h-1}}<U_v,
 \qquad h=|C_j|.
\]
For $4\le h\le7$, direct substitution permits
$U_{375}=1024$ and $U_{441}=U_{575}=256$.

After these substitutions, write the selected source inequalities as
$R_i(x)\le0$. Their positive rational combination is
\[
 R_v(x)=\sum_iw_iR_i(x)
       =c_{v,0}+\sum_{j=1}^{110}c_{v,j}x_j,
 \qquad w_i\in\mathbb Q_{>0}.
\]
Adding the indicated nonpositive count-bound terms cancels every
remaining variable:
\begin{align}
 \mathcal L_v(x):={}&R_v(x)
   +\sum_{c_{v,j}\ge0}c_{v,j}(-x_j)
   +\sum_{c_{v,j}<0}(-c_{v,j})(x_j-U_v)\notag\\
 ={}&c_{v,0}+U_v\sum_{c_{v,j}<0}c_{v,j}=:\Delta_v.
 \label{eq:struct-margin}
\end{align}
Every term on the first line is nonpositive for a hypothetical graph.
The exact source combinations yield the three contradictions
\begin{equation}
 \begin{aligned}
 0&\ge\mathcal L_{375}(x)=\Delta_{375}>\frac7{10^9}>0,\\
 0&\ge\mathcal L_{441}(x)=\Delta_{441}>\frac9{10^7}>0,\\
 0&\ge\mathcal L_{575}(x)=\Delta_{575}>\frac3{10^6}>0.
 \end{aligned}
 \label{eq:struct-three-contradictions}
\end{equation}
Negating \eqref{eq:struct-margin} gives the extended certificate in
Theorem~\ref{thm:exact-certificate}, with $\varepsilon=\Delta_v$.
Thus none of the three parameter sets in this section is realizable.
The source inventories are recorded in Appendix~\ref{app:structural-normalization}.
Only finite rational sums and comparisons are needed to evaluate these
certificates; the search that produced them is not part of the proof.

\vspace{\baselineskip}
\appendix
\section{Exact evaluation and certificate data}\label{app:verification}
\subsection{Polynomial evaluation and singular references}
We use the normalized Gegenbauer recurrence in
\cite[Section~2.1]{musin}:
\[
 P_0^{(D)}(t)=1,\qquad P_1^{(D)}(t)=t,\qquad
 P_\ell^{(D)}(t)=
 \frac{(2\ell+D-4)tP_{\ell-1}^{(D)}(t)
 -(\ell-1)P_{\ell-2}^{(D)}(t)}{\ell+D-3},\quad\ell\ge2.
\]
For $D=2$ this is the Chebyshev recurrence. At residual inner product
$z$ and squared residual lengths $\eta,\eta'$, the homogeneous kernels
satisfy
\[
 K_0=1,\qquad K_1=z,\qquad
 K_\ell=
 \frac{(2\ell+D-4)zK_{\ell-1}
       -(\ell-1)\eta\eta'K_{\ell-2}}{\ell+D-3},\quad\ell\ge2.
\]
Their coefficients are rational when the spherical data and the inverse
reference Gram matrix are rational. The diagonal value is $1$ at degree
zero and $\eta^\ell$ at positive degree, including $\eta=0$.

Every anchor used for the kernels in Section~\ref{sec:direct} has a
positive definite Gram matrix. For a singular anchor, projection would
instead use its actual span, residual dimension $d-\operatorname{rank}G_H$,
and the corresponding range conditions. The singular configurations in
Section~\ref{sec:structural} are treated explicitly by their separate
structural arguments.

\subsection{Certificate snapshot and verification}\label{app:delivery}
The public companion repository is
\begin{center}\small
\url{https://github.com/SelenaJXWANG/srg-gegenbauer-counting}
\end{center}
For reproducibility, the certificate snapshot dated \texttt{2026-10-14}
is fixed at commit
\href{https://github.com/SelenaJXWANG/srg-gegenbauer-counting/tree/85cd39b74c9ef8a61ab746deb1102af3eeae7e10}{\texttt{85cd39b}}~\cite{certificateData}.
Readers may download that snapshot using GitHub's source archive or
check it out with Git. This manuscript revision adds repository
references without altering the certificate data or printed numerical tables.
The repository's \href{https://github.com/SelenaJXWANG/srg-gegenbauer-counting/tree/main/paper}{\texttt{paper/} directory}
contains the article and Supplement~\cite{supp405}, in PDF and TeX.
The \href{https://github.com/SelenaJXWANG/srg-gegenbauer-counting/blob/85cd39b74c9ef8a61ab746deb1102af3eeae7e10/extras/certificate_inputs.sha256}{fixed-input manifest}
\nolinkurl{extras/certificate_inputs.sha256} lists the mathematical
inputs and exact verification engines. Its SHA-256 is
\begin{center}\small\ttfamily
58ebb057e5d840a1571d503bf2c3a4fc0d1854fe6c7336615eb1fc7bc0398b9b
\end{center}
The manifest excludes the manuscript and generated reports, so this
identifier does not depend on the document that prints it. The complete
release file hashes, including the papers, are in
\nolinkurl{extras/SHA256SUMS}.

Table~\ref{tab:certificate-files} locates the certificate used for each
parameter set. A case directory is
\nolinkurl{results/certificates/v_k_lambda_mu/}, with the four numbers
substituted for its name. The exact paths, supporting counting and
structural source files, and the corresponding verification engines
are indexed in
\href{https://github.com/SelenaJXWANG/srg-gegenbauer-counting/blob/85cd39b74c9ef8a61ab746deb1102af3eeae7e10/extras/certificate_index.json}{\nolinkurl{extras/certificate_index.json}}.
The complete data are in the
\href{https://github.com/SelenaJXWANG/srg-gegenbauer-counting/tree/85cd39b74c9ef8a61ab746deb1102af3eeae7e10/results/certificates}{\texttt{results/certificates/} directory}.
\begin{table}[H]
\centering\small
\caption{Certificate inputs used for the displayed contradictions.
Filenames are relative to the indicated case directory; the four
five-point certificates are read directly from the main TeX tables.}
\label{tab:certificate-files}
\begin{tabular}{@{}lp{.64\textwidth}@{}}
\toprule
$(v,k,\lambda,\mu)$ & Certificate source\\
\midrule
$(351,140,73,44)$ & \nolinkurl{raw_certificate.json}\\
$(375,102,45,21)$ & \nolinkurl{exact_certificate.json}\\
$(405,132,63,33)$ & \nolinkurl{ordinary_counting_certificate/exact_certificate.json}\\
$(441,88,35,13)$ & \nolinkurl{exact_certificate.json}\\
$(476,133,60,28)$ & \nolinkurl{raw_certificate.json}\\
$(540,147,66,30)$ & Tables~\ref{tab:direct-functions} and~\ref{tab:direct-coeff-a}\\
$(550,162,75,36)$ & Tables~\ref{tab:direct-functions} and~\ref{tab:direct-coeff-b}\\
$(575,112,45,16)$ & \nolinkurl{exact_certificate.json}\\
$(703,182,81,35)$ & Tables~\ref{tab:direct-functions} and~\ref{tab:direct-coeff-b}\\
$(1344,221,88,26)$ & Tables~\ref{tab:direct-functions} and~\ref{tab:direct-coeff-a}\\
\bottomrule
\end{tabular}
\end{table}
To reproduce the fixed snapshot, run
\begingroup\small
\begin{verbatim}
git clone https://github.com/SelenaJXWANG/srg-gegenbauer-counting.git
cd srg-gegenbauer-counting
git checkout 85cd39b74c9ef8a61ab746deb1102af3eeae7e10
python3 -S code/verify.py
\end{verbatim}
\endgroup
This requires Python 3.9 or later and only its standard library.
The five stages check the retained ten-case constructions, the ordinary
counting proof and all-degree comparison for $405$, the actual short
TeX certificates, the expanded TeX certificates, and the article's
parameter and structural data. A successful run ends with
\begin{quote}\small\ttfamily
PASS: five exact verification stages; fixed inputs unchanged.
\end{quote}
The generated report is \nolinkurl{results/latest_verification/summary.json};
the retained check for this release is
\nolinkurl{results/release_checks/summary.json}. No SDP solver is needed.
The common command also stages the preserved engines with their required
input paths; individual engine files are not separate public entry points.

The same command is run by the repository's
\href{https://github.com/SelenaJXWANG/srg-gegenbauer-counting/actions}{GitHub Actions workflow}.
For the fixed snapshot above, it completed successfully under Python
3.9 and 3.12; the
\href{https://github.com/SelenaJXWANG/srg-gegenbauer-counting/actions/runs/38571633757}{recorded workflow run}
provides the execution logs. These checks verify the finite rational
calculations, while the counting and positivity arguments in the article
justify the mathematical implications. Code is licensed under MIT;
the article, supplement, and certificate data are licensed under CC BY 4.0.

Where earlier raw JSON certificates are retained, they need not match
the printed five-point directions. The TeX tables are
the inputs for those four printed proofs, and $1344$ also has an
independent reconstruction in
\nolinkurl{code/verification/verify_1344_certificate.py}.
For $405$, \nolinkurl{ordinary_counting_certificate/} is the present
proof; \nolinkurl{earlier_structural_certificate/} is a distinct retained
proof using extra zero-count relations. Optional numerical searches
may produce further certificates and are separate from this replay.

\subsection{Exact data and graph encodings}
The supplementary data contain all rational source rows, directions,
weights, coordinate substitutions and verification programs. The
verifiers reconstruct graph types, count substitutions and kernel
coefficients, check reference Gram matrices by exact elimination, and
check each final rational sign. They require no numerical optimizer.
Appendices~\ref{app:directions} and~\ref{app:weights} print the functions
and multipliers for $351$ and $476$; Supplement~\cite{supp405}
prints the complete $405$ certificate and its reconstruction formulas.
The full numerical source lists for the structural cases accompany the
article; Section~\ref{sec:structural} proves their source conditions and
gives the resulting explicit contradictions.

For the five-point and six-point certificate data,
edge strings use the order
$01,02,\ldots,0(h-1),12,\ldots,(h-2)(h-1)$.
For the structural and $405$ data, an integer graph mask puts edge $ij$,
$i<j$, in bit $j(j-1)/2+i$, giving the order
$01,02,12,03,13,23,\ldots$.
In the structural data, a catalogue record $(\mathrm{id},h,b)$
identifies $\mathrm{id}$ with the $h$-vertex graph of mask $b$;
catalogue identifiers are not masks. The graphs $C_j$ follow the
free-column order recorded with the affine substitution.
All profile vectors use binary lexicographic order with coordinate $0$
first. The decoding convention is part of the certificate specification.

\subsection{Normalization of the structural certificates}
\label{app:structural-normalization}
For an $m$-vertex anchor, put $D=d-m$ and
$u_x=\sqrt{3-\rho}\,p_x$. Write
$\zeta=\langle u_x^\perp,u_y^\perp\rangle$,
$\eta=\|u_x^\perp\|^2$ and $\eta'=\|u_y^\perp\|^2$.
The structural source data use the kernels
$1$, $\zeta$ and $\zeta^2-\eta\eta'/D$.
Consequently a source row before its positive row normalization is
\[
 -\frac{2^{\binom{m+1}{2}}}{(v)_{m+1}}\,
 \chi_\ell S(H,\ell,f),\qquad
 (\chi_0,\chi_1,\chi_2)=
 \left(1,3-\rho,(3-\rho)^2\frac{D-1}{D}\right).
\]
The $R_i$ in \eqref{eq:struct-margin} are the resulting rows after
the stored positive row normalization.
All reference Gram matrices are positive definite. The stored $f$ gives
rational values on the retained profiles in binary lexicographic order.
A profile is omitted only if its one-vertex extension has a proved zero
count or its prescribed residual squared length is negative; neither
can occur at an actual reference copy. Set $f=0$ on every such profile.
Coefficients at forbidden graph counts are removed by their zero-count
equations. These conventions introduce no additional sign or scale.

For each parameter set, the data contain $58$ integer witnesses for
zero counts and $273$ witnesses for $76$ distinct completion equations.
After ordinary counting substitutions their rank is $63$. The exact
affine map has $110$ free scaled graph counts and constant coordinate $1$;
its free rows are an identity matrix. Exact substitution and the rank
calculation show that it parametrizes the entire solution space of
these equations. The rational source lists, row normalizations and
positive weights determine every coefficient in
\eqref{eq:struct-margin}, including the bounded residual terms.

\begin{table}[H]
\centering
\caption{Sources in the three structural certificates, before adding the
count-bound terms of~\eqref{eq:struct-margin}.}
\label{tab:struct-certificates}
\begin{tabular}{rrrrr}
\toprule
$v$&Count sources&Kernel sources&Quartics&Total\\
\midrule
375&13&97&1&111\\
441&21&88&2&111\\
575&60&48&3&111\\
\bottomrule
\end{tabular}
\end{table}
\subsection{Five-point and six-point data normalization}
The retained five-point and six-point data store the counts $N_F/(v|\Aut F|)$ and the energies
$S(H,\ell,f)/(v|\Aut H|)$. Multiplying an archived identity by $v$ restores
the present normalization, with automorphism factors absorbed into the
multipliers. The definition of $S$ is unchanged.

\subsection{A separate numerical reproduction}
The accompanying reproduction code implements a search for fresh
rational certificates. In a separate research run, it produced verified
certificates for all ten parameter sets. The six cases of
orders five and six are rebuilt from the parameters; the four seven-point
cases use fresh optimization on the previously verified counting and
structural models. Their rational certificates and conversions to the
present $S$ convention pass exact verification. They are alternative
positive combinations; all displayed functions, source inventories and
margins in this paper refer to the certificates specified here.
The distribution includes these printed certificates and the discovery
code; the alternative numerical-run outputs are not needed for their
verification and are not included.

The numerical reproduction uses CVXPY~\cite{cvxpy} to model the
semidefinite programs and Clarabel~\cite{clarabel} as its default solver.
Array operations use NumPy~\cite{numpy}, and numerical linear programs
use SciPy~\cite{scipy} through its HiGHS interface. Exact replay uses
Python's rational arithmetic and does not depend on a numerical solver.

\section{Coefficients for the five-point certificates}\label{app:five-coefficients}
The entries below are the strictly positive rational coefficients in
\eqref{eq:direct-contradiction}. Together with the functions and multipliers
in Table~\ref{tab:direct-functions}, they give every coefficient of the
four identities in Section~\ref{sec:direct}.
\begin{table}[H]\centering\small
\caption{Exact coefficients in \eqref{eq:direct-contradiction}, using labelled counts $N_F$.}
\label{tab:direct-coeff-a}
\renewcommand{\arraystretch}{1.8}
\begin{tabular}{ccc}\toprule
&$v=540$&$v=1344$\\\midrule
$b_0$&$\dfrac{574630684892568}{6496825}$&$\dfrac{11861558480040192}{18036575}$\\
$b_1$&$\dfrac{519736215717}{124791014600000}$&$\dfrac{5233918851}{171401572225}$\\
$b_2$&$\dfrac{66130353}{909555500}$&$\dfrac{123153959}{15944332300}$\\
$b_3$&$\dfrac{2901906377541}{62395507300000}$&$\dfrac{22833078929}{1028409433350}$\\
$b_4$&$\dfrac{341895786273}{24958202920000}$&$\dfrac{118254493}{16587248925}$\\
\bottomrule\end{tabular}\end{table}
\begin{table}[H]\centering\small
\caption{Exact coefficients in \eqref{eq:direct-contradiction}, using labelled counts $N_F$.}
\label{tab:direct-coeff-b}
\renewcommand{\arraystretch}{1.8}
\begin{tabular}{ccc}\toprule
&$v=550$&$v=703$\\\midrule
$b_0$&$\dfrac{3153820063124750}{4826151}$&$\dfrac{827108257923964177}{2718487200}$\\
$b_1$&$\dfrac{2768645749393}{8561109258900}$&$\dfrac{37006789818385019}{667991024363520000}$\\
$b_2$&$\dfrac{19500242539}{15732075150}$&$\dfrac{133712632548389}{343508042592000}$\\
$b_3$&$\dfrac{296782302161}{23543050461975}$&$\dfrac{665498801527621007}{8683883316725760000}$\\
$b_4$&$\dfrac{1327509707}{778282659900}$&$\dfrac{509997958311169}{108548541459072000}$\\
\bottomrule\end{tabular}\end{table}
\section{Rational directions for the six-point certificates}\label{app:directions}
The tables give all rational directions for $351$ and $476$.  For $m=4$, the sixteen
entries are in binary lexicographic order $0000,0001,\ldots,1111$;
edge strings list $01,02,03,12,13,23$.  As before, the vector in a row
is a complete specification of the function on adjacency sets.

\begingroup\small\begin{longtable}{rrrl}
\caption{All rational directions for $v=351$; divide each displayed vector by $30$.}\label{tab:direct-directions-351}\\
\toprule $i$&$H_i$&$\ell_i$&$30f_i$\\\midrule\endfirsthead
\toprule $i$&$H_i$&$\ell_i$&$30f_i$\\\midrule\endhead
\bottomrule\endfoot
1&000&1&\scriptsize $(1,9,9,-20,9,-20,-20,30)$\\
2&000000&1&\scriptsize $(0,3,-5,7,-3,0,27,-30,5,-27,0,18,-7,29,-17,0)$\\
3&000001&1&\scriptsize $(0,-7,-7,-5,-9,5,5,16,-9,5,5,16,15,-1,-1,-30)$\\
4&000011&1&\scriptsize $(-2,4,-9,-7,-9,-7,25,5,1,-20,12,20,12,20,-30,-17)$\\
5&000011&1&\scriptsize $(0,0,11,-18,-11,18,0,0,0,0,-24,30,24,-30,0,0)$\\
6&000011&2&\scriptsize $(5,-18,5,-6,5,-6,9,8,4,16,-12,12,-12,12,-30,9)$\\
7&000111&1&\scriptsize $(5,-5,-5,-6,-5,-6,-6,29,-24,17,17,1,17,1,1,-30)$\\
8&001011&1&\scriptsize $(8,-30,4,15,4,15,-8,-9,4,15,-8,-9,-8,-9,3,12)$\\
9&001100&1&\scriptsize $(0,6,-6,0,-6,0,30,-24,6,-30,0,24,0,24,-24,0)$\\
10&001100&1&\scriptsize $(-6,9,9,-6,9,-6,-1,-9,9,-1,-6,-9,-6,-9,-9,30)$\\
11&001100&2&\scriptsize $(0,18,-15,-1,15,-14,0,-30,-18,0,14,25,1,-25,30,0)$\\
12&001111&1&\scriptsize $(10,-19,-5,16,-5,16,-13,-1,-5,13,-7,-4,-7,-4,30,-16)$\\
13&011110&1&\scriptsize $(15,-8,-8,-2,-8,-1,-1,14,-8,-1,-1,14,-2,14,14,-30)$\\
14&011110&2&\scriptsize $(3,-6,-6,30,-6,-19,-19,4,-6,-19,-19,4,30,4,4,19)$\\
15&011111&1&\scriptsize $(1,-12,-12,30,0,3,3,-13,0,3,3,-13,7,-2,-2,5)$\\
16&011111&1&\scriptsize $(-15,7,7,-3,21,-1,-1,-14,21,-1,-1,-14,-30,-2,-2,30)$\\
17&111111&1&\scriptsize $(16,-12,-12,3,-12,3,3,11,-12,3,3,11,3,11,11,-30)$\\
\end{longtable}\endgroup

\begingroup\small\begin{longtable}{rrrl}
\caption{All rational directions for $v=476$; divide each displayed vector by $10$.}\label{tab:direct-directions-476}\\
\toprule $i$&$H_i$&$\ell_i$&$10f_i$\\\midrule\endfirsthead
\toprule $i$&$H_i$&$\ell_i$&$10f_i$\\\midrule\endhead
\bottomrule\endfoot
1&111&1&\scriptsize $(-2,5,5,-7,5,-7,-7,10)$\\
2&000011&1&\scriptsize $(0,0,1,-4,-1,4,0,0,0,0,-7,10,7,-10,0,0)$\\
3&001011&1&\scriptsize $(0,0,4,-1,-2,1,-6,4,-2,1,-4,3,10,-7,0,0)$\\
4&001011&1&\scriptsize $(4,-10,-1,7,-1,7,0,-4,-1,7,0,-4,0,-4,0,3)$\\
5&001100&1&\scriptsize $(0,-1,1,0,1,0,9,-10,-1,-9,0,10,0,10,-10,0)$\\
6&001111&1&\scriptsize $(0,-9,-4,10,-4,10,-4,0,-4,7,8,-9,8,-9,-2,1)$\\
7&011110&1&\scriptsize $(0,9,9,1,-9,0,0,-10,-9,0,0,-10,-1,10,10,0)$\\
8&011110&1&\scriptsize $(0,1,-1,0,-1,-1,10,-8,1,-10,1,8,0,10,-10,0)$\\
9&011110&2&\scriptsize $(0,0,0,-10,0,0,0,-1,0,0,0,-1,10,1,1,0)$\\
10&011110&2&\scriptsize $(0,-6,6,0,10,-2,-7,7,-10,7,2,-7,0,-4,4,0)$\\
11&011111&1&\scriptsize $(1,0,0,1,-3,-1,-1,3,-3,-1,-1,3,2,5,5,-10)$\\
12&011111&1&\scriptsize $(0,0,0,0,0,-5,5,0,0,-5,5,0,0,10,-10,0)$\\
13&011111&1&\scriptsize $(0,-4,-4,10,-2,1,1,-3,-2,1,1,-3,8,-3,-3,-1)$\\
14&011111&2&\scriptsize $(0,0,0,0,-6,3,3,10,6,-3,-3,-10,0,0,0,0)$\\
15&111111&1&\scriptsize $(3,7,7,-10,7,-10,-10,7,7,-10,-10,7,-10,7,7,1)$\\
16&111111&2&\scriptsize $(2,-9,-9,0,-9,0,0,10,-9,0,0,10,0,10,10,-3)$\\
\end{longtable}\endgroup

\section{Integer multipliers for the six-point certificates}\label{app:weights}
\providecommand{\srgBigInt}[3]{\left[\begin{array}{@{}r@{}}#1\\#2\\#3\end{array}\right]_{40}}
The multipliers include all automorphism factors and refer to labelled
$N$ and unnormalized $S$. There are $17$ kernel terms and $7$ count
terms for $351$, and $16$ kernel terms and $8$ count terms for $476$.
With the normalization of Section~\ref{sec:direct}, each identity has
constant $-1$. The machine-readable counterparts are the two
\nolinkurl{raw_certificate.json} files in Table~\ref{tab:certificate-files};
the verifier compares them with every integer printed below. The full
tables are retained so that the exact certificate is also specified
within the paper. Within each $v$-block write
$\omega_i=\omega_{v,i}$ and $\zeta_j=\zeta_{v,j}$. We print integers as
\[
 \srgBigInt{a}{b}{c}=a\,10^{80}+b\,10^{40}+c
 \qquad(0\le a,b,c<10^{40}).
\]
Graphs use the edge-string convention in Appendix~\ref{app:verification}.
The common scales are
\[
 L_{351}=\srgBigInt{102711949647804269727238704359485866911}{7090228324872847275751373679454059820670}{1076412292550227782101992011071927842880}.
\]
\[
 L_{476}=\srgBigInt{128341483}{4387833639899167631266490996799829173466}{4104560806157292825403747795632063212800}.
\]
\clearpage
\begingroup\fontsize{7.2}{9}\selectfont
\setlength{\tabcolsep}{1.5pt}
\renewcommand{\arraystretch}{1.06}
\begin{longtable}{rcrlc}
\caption{Complete integer weights for $v=351$. The row $i$ selects the function in Appendix~\ref{app:directions}; $G_j$ is the six-vertex graph with the displayed edge string.}\label{tab:six351-weights}\\
\toprule $i$&$\omega_i$&$j$&$G_j$&$\zeta_j$\\\midrule\endfirsthead
\toprule $i$&$\omega_i$&$j$&$G_j$&$\zeta_j$\\\midrule\endhead
\bottomrule\endfoot

1&$\srgBigInt{140262026891169625080174076332493}{402642432273172888010149145022689052449}{9108507321249499450249403888842809600000}$&1&\texttt{000010001001011}&$\srgBigInt{21727122221693832488348818346229}{5870717876111924047234281078378676719478}{4202236583231146898001697016679035516330}$\\[1pt]

2&$\srgBigInt{9027285625014494631942984406547}{2285407457457158092134549667463318507240}{5528233941778865256370366886213792456000}$&2&\texttt{000110011011110}&$\srgBigInt{454817647483139570977157992483908}{4525746483369943415330728387435296123075}{2079341097724794930652082584472056188819}$\\[1pt]

3&$\srgBigInt{188304312248630294141452859386598}{2427829677299695212762874421695603519227}{8802927176755503307415108560935126400000}$&3&\texttt{000110011011111}&$\srgBigInt{347683263903066310248246571598587}{4685715507194972718252169150185073625732}{1916025133489648510404303567469403433546}$\\[1pt]

4&$\srgBigInt{33007457788580689459563718579015}{5196092378025351170368317686296951450104}{4684986855306800462739663305789919500000}$&4&\texttt{000110011111110}&$\srgBigInt{3369959043029563906427789218335620}{4530034752955328546104660808623111565624}{6026712397667319220082215224249717034184}$\\[1pt]

5&$\srgBigInt{6017719748416982696758134552982}{1871196422489024955017279954013811713578}{6277584542500652125291651896329654700000}$&5&\texttt{000110111111000}&$\srgBigInt{609026391690289958078804084535506}{7304131907204129798061187119586083204943}{245263261426625719659511163466377067579}$\\[1pt]

6&$\srgBigInt{97773520988983403406256931285066}{1835992994959336292071600822142713052862}{9833052338791783133787084248417162500000}$&6&\texttt{001110111111000}&$\srgBigInt{116307044230683818208992812601676}{5068737547707512623155175408717750511277}{8078259045736268576347343779774397582004}$\\[1pt]

7&$\srgBigInt{35400097741269002402380729786438}{3289049985537485265485691822790714245864}{8012796245313801796832855062677630576000}$&7&\texttt{001110111111001}&$\srgBigInt{1444815902054691834456912919554414}{5852692936275325101946402549302898575841}{5895167162896407063830271323309577758504}$\\[1pt]

8&$\srgBigInt{369509214537610886754996110746551}{2147920939113707467005112703494131646343}{6202941347112613832159927597385497344000}$&&&\\[1pt]

9&$\srgBigInt{61640744961074394115235478358136}{4790871484077601812963527361987060278298}{7401938012689846937072584160480639872000}$&&&\\[1pt]

10&$\srgBigInt{231445612675255990474046680219498}{2965192612214787131889538087854222302562}{3976570132192282071534601936029325184000}$&&&\\[1pt]

11&$\srgBigInt{14484213607788919130425343915937}{3148873514931183152823666235384313825288}{2037616496668923533911873336370135040000}$&&&\\[1pt]

12&$\srgBigInt{899089634309870492770800215884828}{249607215509471011429960060384782485374}{5770175143889076782026056863049010480000}$&&&\\[1pt]

13&$\srgBigInt{161954488799111804444772990608552}{1028559918805719878034494956705656961357}{3317545697735941650026783493441282780000}$&&&\\[1pt]

14&$\srgBigInt{413358868328204583373213681940387}{9866345230542660047869583168615358465701}{9313466666083494675981761391317817297500}$&&&\\[1pt]

15&$\srgBigInt{73156995329616085324056788660501}{975747532324816598714120615853914249585}{5579840156451171999389650347285131300000}$&&&\\[1pt]

16&$\srgBigInt{306359992133369510665913881378707}{7198911732455708046783206681994856035895}{6309382295627641450303190210818896800000}$&&&\\[1pt]

17&$\srgBigInt{39087066120075618692186837051389}{3544778065860184420249950747432742974112}{1107790338959812320089453460505268392000}$&&&\\[1pt]

\end{longtable}\endgroup
\clearpage
\begingroup\fontsize{7.2}{9}\selectfont
\setlength{\tabcolsep}{1.5pt}
\renewcommand{\arraystretch}{1.06}
\begin{longtable}{rcrlc}
\caption{Complete integer weights for $v=476$. The row $i$ selects the function in Appendix~\ref{app:directions}; $G_j$ is the six-vertex graph with the displayed edge string.}\label{tab:six476-weights}\\
\toprule $i$&$\omega_i$&$j$&$G_j$&$\zeta_j$\\\midrule\endfirsthead
\toprule $i$&$\omega_i$&$j$&$G_j$&$\zeta_j$\\\midrule\endhead
\bottomrule\endfoot

1&$\srgBigInt{1}{1532903875339920837204373990570212030453}{7549696509976445269427155079054519695200}$&1&\texttt{000010011011110}&$\srgBigInt{26}{9830849083055629150240368678627677481105}{9393004220420023157014711307998623552367}$\\[1pt]

2&$\srgBigInt{5}{4211636264983894030186443184728863211590}{234861464398603048720636543520782956800}$&2&\texttt{000110011011111}&$\srgBigInt{95}{7083599885321012957152103625761292689695}{4674580075927447635648727252566658637768}$\\[1pt]

3&$\srgBigInt{8}{770386311975691435167761473344271024643}{6576510960066313837102290165927183884000}$&3&\texttt{000110011111110}&$\srgBigInt{51}{422891615578313615850638467025161051840}{5110526244023460081340274099125868909762}$\\[1pt]

4&$\srgBigInt{9}{3946670426081474522384315251415560012652}{2689884726462183010389663315410250798000}$&4&\texttt{000110111111000}&$\srgBigInt{74}{7250411576366376348243592631984137955526}{8628969446137315823148429999298070496028}$\\[1pt]

5&$\srgBigInt{3}{615785654518659065687888819388354524540}{6185773042805127299448341693985913240000}$&5&\texttt{000110111111010}&$\srgBigInt{32}{9879573890969416942948506319447406128698}{6956066889439371456165445230857791365343}$\\[1pt]

6&$\srgBigInt{24}{9006202787169927901301480772000041371074}{8797589647922124754375056207932591616000}$&6&\texttt{001110111111000}&$\srgBigInt{126}{2603439299125317564430478465934637535447}{248053378819080341004013058929739958835}$\\[1pt]

7&$\srgBigInt{1}{690707263130732125622572270541627184547}{6139204641884343984771427977253606716250}$&7&\texttt{001110111111011}&$\srgBigInt{169}{3175493685256253313785902097733855861118}{875183165568377264664111504225650974669}$\\[1pt]

8&$\srgBigInt{2}{1559687142264927778101806481602550726851}{8296139835015759052404005500241945691800}$&8&\texttt{011111111011110}&$\srgBigInt{5}{4972661694553314801873304817326813829399}{1871466464399734527480048897333033374624}$\\[1pt]

9&$\srgBigInt{71}{1533514168334911183500195002955778876296}{5119448340281984647164750642267255840325}$&&&\\[1pt]

10&$\srgBigInt{37}{1714642777539251493676478363706140175326}{9336930383837753886146814148139860589100}$&&&\\[1pt]

11&$\srgBigInt{7}{4457612952471220848564583332389729623008}{8077269000384674118960023021587680383600}$&&&\\[1pt]

12&$\srgBigInt{23}{463295452950103318511592310413201110236}{8238611539394202537707693911158640337496}$&&&\\[1pt]

13&$\srgBigInt{120}{7576419446140606090820643123118388303458}{266298012796449102025379519228211453200}$&&&\\[1pt]

14&$\srgBigInt{1}{701463304004479506919490660943427723153}{2771639061770650087630270675737582505600}$&&&\\[1pt]

15&$\srgBigInt{2}{5447305043906599301240013889074606104941}{1117852532327002617425936355666249410400}$&&&\\[1pt]

16&$\srgBigInt{4}{3008436872385535446416295472525510036297}{7439257850852303756512661288968216684800}$&&&\\[1pt]

\end{longtable}\endgroup
\clearpage
\section*{Acknowledgements}
The mathematical ideas, research design, choice of constraints, and
interpretation of the results were developed by the authors. We used
GPT (OpenAI) to assist with implementing these ideas in code and
revising the implementations under our direction. The resulting
nonexistence claims are supported by the exact rational certificates
and verification procedures supplied with this article. The authors
take responsibility for the mathematical arguments, computations,
and presentation.

\end{document}